\documentclass[11pt]{article}
\usepackage{hyperref}
\usepackage{tikz}
\usetikzlibrary{decorations.pathreplacing}
\usepackage{benstyle}

\usepackage{enumitem}
\usepackage{comment}
\usepackage{cite}

\title{The devil in the (de)tails: an improved recovery guarantee for sparse approximation}
\author{Ben Adcock\thanks{Department of Mathematics, Simon Fraser University, Canada, \url{ben_adcock@sfu.ca}, \url{avi_gupta@sfu.ca}} \and Simone Brugiapaglia\thanks{Department of Mathematics and Statistics, Concordia University, Canada, \url{simone.brugiapaglia@concordia.ca}} \and Avi Gupta\footnotemark[1]}

\begin{document}

\maketitle

\begin{abstract}
Many functions exhibit approximate sparsity in their coefficients with respect to a given dictionary. In recent literature, sparse approximation in such a dictionary from i.i.d.\ pointwise samples, underpinned by compressed sensing, has become a powerful tool for high-dimensional function approximation. A key step in this framework is truncating the (typically countably-infinite) dictionary to a finite index set of size $n$, so that compressed sensing tools can be used to approximate the function by a sparse combination of these truncated dictionary elements. This introduces a discrete $L^2$-truncation error over the sample points, which in standard approaches, is bounded by the continuous $L^\infty$-norm. Such a deterministic, worst-case bound ignores the randomness of the sample points entirely. As a result, $n$ must be taken unnecessarily large to keep the truncation error under control, which directly inflates the size of the matrix involved in the sparse recovery algorithm and increases computational cost. In this paper, we show that by exploiting the i.i.d.\ structure of the sample points, the discrete $L^2$ truncation error admits a bound that instead reflects the faster decay behaviour of the continuous $L^2$-norm truncation error and yields significantly smaller truncation sets and decreased computational cost. We demonstrate this through applications to weighted Wiener spaces and anisotropic Sobolev spaces, in each case obtaining significantly smaller truncation sets than recent works. In addition, we also present an improved bound of independent interest for sparse approximation in bounded Riesz systems, where the measurement condition exhibits a smaller (and scale-invariant) dependence on the Riesz constants than in previous works.
\end{abstract}

\noindent\textbf{Keywords and phrases:} High-dimensional approximation, Sparse approximation, Nonlinear approximation, Compressed sensing, Random samples, Universal algorithms, Riesz bases

\noindent\textbf{MSC 2020:} 65D15, 65Y20, 65D40, 41A25, 65T40, 41A46

\section{Introduction}\label{s:intro}

In recent years, sparse approximation in an orthonormal basis or dictionary has been established as an effective means to approximate functions from pointwise samples. Methods involving sparse approximation have proved effective in practice for, notably, high-dimensional approximation tasks \cite{rauhut2012sparse,rauhut2016interpolation,adcock2022sparse,adcock2024efficient}. More recently, sparse approximation has also been used effectively to establish new results on sampling numbers for various function spaces \cite{moeller2026best,moeller2023gelfand,moeller2025high,moeller2025sampling,moeller2024high,jahn2023sampling,adcock2024optimal,adcock2025optimal,adcock2024optimalb,adcock2026universal}, including classical Sobolev spaces, dominating mixed smoothness Sobolev spaces, mixed Wiener spaces and anisotropic spaces of infinite-dimensional
holomorphic functions. 

\subsection{Sparse approximation}

Let $(D,\cD,\rho)$ be a probability space, $L^2_{\rho}(D)$ be the space of square-integrable functions $f : D \rightarrow \bbC$ and $\{ \phi_i \}_{i \in \bbN}$ be a Riesz basis\footnote{Many of the aforementioned works consider orthonormal bases. A secondary contribution of this paper is to derive guarantees in the general setting of Riesz bases---see \S \ref{ss:discussion} for  further discussion. Since any orthonormal basis is a Riesz basis with $a_{\phi} = b_{\phi} =1$, all our results apply seamlessly to orthonormal bases as well.} of $L^2_{\rho}(D)$ with constants $a_{\phi},b_{\phi} > 0$, i.e., such that
\be{
\label{riesz-basis}
a_{\phi} \nm{c}^2_{2} \leq \nms{\sum_{i \in \bbN} c_i \phi_i }^2_{L^2_{\rho}} \leq b_{\phi} \nm{c}^2_{2},\quad \forall c \in \ell^2(\bbN).
}
We also assume that $\{ \phi_i \}_{i \in \bbN}$ is bounded, with constant $K_{\phi} < \infty$, i.e.,
\be{
\label{Kphi-def}
K_{\phi} = \sup_{i \in \bbN} \nm{\phi_i}_{L^{\infty}_{\rho}} < \infty.
}
The basic idea of sparse approximation is to approximate an unknown function $f$ from samples\footnote{Our extend readily to the noisy setting, since the SR-LASSO decoder we use (see \S \ref{ss:the-decoder}) is designed to handle samples corrupted by additive noise. Specifically, if \(y_i = f(x_i)+\eta_i\), \(i=1,\ldots,m\), the corresponding error bounds acquire an additional term proportional to \(\|\eta\|_2/{\sqrt{m}}\). We focus on noiseless samples \ef{f-samples} for simplicity.} 
\be{
\label{f-samples}
(x_1,f(x_1)),\ldots,(x_m,f(x_m))
}
by an $s$-sparse function in $\{ \phi_i \}_{i\in \mathbb{N}}$, 
i.e.,
\be{
\label{f-hat-sparse}
f \approx \hat{f} = \hat{c}_{i_1} \phi_{i_1} + \cdots + \hat{c}_{i_s} \phi_{i_s}.
}
This is typically done by recasting the problem as a sparse recovery problem for the coefficients of $f$. One first truncates the orthonormal basis using a finite, but large $n \geq m$, writes $f = \sum^{\infty}_{i=1} c_i \phi_i$ and then observes that the vector $c_{[n]} = (c_i)^{n}_{i=1}$ of the first $n$ coefficients satisfies
\be{
\label{CS-prob}
b = A c_{[n]} + e,
}
where
\be{
\label{bAe-def}
b = \frac{1}{\sqrt{m}} (f(x_i))^{n}_{i=1},\quad A = \frac{1}{\sqrt{m}} \left ( \phi_j(x_i) \right )^{m,n}_{i,j=1},\quad e = \frac{1}{\sqrt{m}} (f(x_i) - f_{n}(x_i))^{m}_{i=1}
}
and $f_n = \sum^{n}_{i=1} c_i \phi_i$ is the truncated expansion of $f$. One then seeks a sparse approximate solution of the linear system \ef{CS-prob}. This can be done by, for instance, solving a convex $\ell^1$-minimization problem, using a greedy algorithm such as Orthogonal Matching Pursuit (OMP) or by various other methods. For convenience, we shall write $\cR_{s,n}$ for the sparse recovery procedure that takes the samples \ef{f-samples} of $f$ and produces an approximation $\hat{f} = \cR_{s,n}(f)$. The specific procedure we use in this paper is defined in \S \ref{ss:the-decoder}.

Now consider the error $f - \hat{f}$. This can be written as
\bes{
f - \hat{f} = \underbrace{f_n - \hat{f}}_{\mathrm{(a)}} + \underbrace{f - f_n}_{\mathrm{(b)}}.
}
Here (a) is the \textit{sparse recovery error}, i.e., the error in recovering the approximately sparse vector of coefficients $c_{[n]}$ and (b) is the \textit{tail error}, i.e., the error due to truncation. In particular, $n$ should be chosen sufficiently large such that (b) is no larger than (a).

\subsection{The devil in the (de)tail}

Sparse approximation is undergirded by the theory of Compressed Sensing (CS) \cite{foucart2013mathematical}. A typical result takes the following form (for completeness, we prove this result in \S \ref{s:main-proofs}).

\thm{
[Standard CS bound]
\label{t:basic-CS-thm}
Let $0 < \varepsilon < 1$, $s \in \bbN$ and $x_1,\ldots,x_m \sim_{\mathrm{i.i.d.}} \rho$, where $m$ satisfies
\be{
\label{samp-rate}
m \geq c \cdot \left ( \frac{K^2_{\phi}}{a_{\phi}} \right ) \cdot s \cdot \left ( \log^2 \left (2 \frac{K^2_{\phi}}{a_{\phi}} s \right ) \log(2n) + \log(1/\varepsilon) \right )
}
for some universal constant $c > 0$.
Then the following holds with probability at least $1-\varepsilon$. For any $f \in L^2_{\rho}(D)$ that is defined everywhere, the approximation $\hat{f} = \cR_{s,n}(f)$ is at most $2s$-sparse and satisfies, for all $p \in [2,\infty]$,
\be{
\label{basic-CS-err-bd}
\nm{f - \hat{f}}_{L^p_{\rho}} \lesssim (b_{\phi})^{\frac1p} (K_{\phi})^{1-\frac2p} \left (  
\frac{\sigma_s(c_{[n]})_1}{s^{\frac1p}} +  \frac{s^{\frac12-\frac1p}}{\sqrt{a_{\phi}}} \nm{f - f_n }_{L^{\infty}_{\rho}} \right ).
}
}
Here and elsewhere, we write $a \lesssim b$ to mean $a \leq C b$ for some universal constant $C > 0$, $a \gtrsim b$ to mean $b \lesssim a$, and $a \asymp b$ to mean both $a \lesssim b$ and $a \gtrsim b$. When we write $\lesssim_{\lambda}$ the implicit constant may depend on the parameter(s) $\lambda$.

For succinctness, we do not specify how $\hat{f}$ is computed in Theorem \ref{t:basic-CS-thm}, nor in our main result below. However, it is based on solving an $\ell^1$-minimization problem followed by a thresholding step to obtain a $2s$-sparse approximation. See \S \ref{ss:the-decoder} for the full details  (note that one can obtain an $s$-sparse approximation subject to an additional term $s^{\frac12-\frac1p} \sigma_s(c_{[n]})_2$ in the error bound, as explained in Remark \ref{rem:2s-s-why}). We remark in passing that other decoders, such as OMP, Compressive Sampling Matching Pursuit (CoSaMP) or Hard Thresholding Pursuit (HTP) \cite{foucart2013mathematical}, could readily be considered, with minimal changes.

In Theorem \ref{t:basic-CS-thm}, the term $\sigma_s(c)_1$ is $\ell^1$-norm best $s$-term approximation error of $c$. In general, for $1 \leq p \leq \infty$, we define the $\ell^p$-norm best $s$-term approximation error of $c$ as
\bes{
\sigma_s(c)_p : = \inf \{ \nm{c-z}_{p} : \text{$z$ is $s$-sparse} \} \equiv \left ( \sum_{i > s} | c_{\pi(i)} |^p \right )^{1/p}
}
where $\pi : \bbN \rightarrow \bbN$ is a bijection that gives a nonincreasing rearrangement of $c = (c_i)_{i \in \bbN}$ by absolute value (also, ``$s$-sparse'' means that $z$ has at most $s$ nonzero entries).
Notice also that
\bes{
\nm{f - f_n}_{L^{\infty}_{\rho}} \leq \sqrt{K_{\phi}} \sum_{i > n} | c_i | = : \tau_n(c)_1,
}
where we likewise define the $\ell^p$-norm $n$-term approximation error of $c$ as
\bes{
\tau_n(c)_p = \inf \{ \nm{c - z}_p : \text{$z$ is nonzero in its first $n$ entries} \} \equiv \left ( \sum_{i > n} |c_i|^p \right )^{1/p}.
}
Hence, in the literature, one often encounters the following bound:
\be{
\label{bd-involving-tau}
\nm{f - \hat{f}}_{L^p_{\rho}} \lesssim  \frac{\sigma_s(c)_1}{s^{\frac1p}} + \sqrt{\frac{K_{\phi}}{a_{\phi}}} s^{\frac12-\frac1p} \tau_n(c)_1.
}
This result is broad and powerful, and has found used in the  majority of the aforementioned works. However, it suffers from a crucial limitation in that the truncation error $f - f_n$ is measured in the $L^{\infty}_{\rho}$ norm, while the overall error is measured in the $L^p_{\rho}$-norm.  As a result, when using Theorem \ref{t:basic-CS-thm} to derive some concrete rate of convergence in terms of $m$, as is usually the goal, one often needs to choose $n$ very (and, as we shall show, unnecessarily) large in comparison to $s$. While this only has a logarithmic effect on the sampling rate in view of \ef{samp-rate}, it also has practical consequences: the system \ef{CS-prob} is of size $m \times n$ and the vast majority of sparse recovery algorithms will therefore incur a computational cost scaling linearly in $n$.

\subsection{Main contribution}

The main contribution of this paper is a new bound which improves Theorem \ref{t:basic-CS-thm} by replacing the term $ \nm{f - f_n }_{L^{\infty}_{\rho}}$ by error terms measured in the $L^2_{\rho}$-norm only. Specifically:

\thm{
[Main result]
\label{t:main-res}
Let $0 < \varepsilon < 1$, $s \in \bbN$ and $x_1,\ldots,x_m \sim_{\mathrm{i.i.d.}} \rho$, where $m$ satisfies
\be{
\label{samp-cond-2}
m \geq c \cdot \left ( \frac{K^2_{\phi}}{a_{\phi}} \right ) \cdot s \cdot \left ( \log^2 \left (2 \frac{K^2_{\phi}}{a_{\phi}} s \right ) \log(2n) + \log(2/\varepsilon) \right )
}
for some universal constant $c > 0$.
Then the following holds with probability at least $1-\varepsilon$.  For any $f \in L^2_{\rho}(D)$ that is defined everywhere, the approximation $\hat{f} = \cR_{s,n}(f)$ is at most $2s$-sparse and satisfies, for all $0 < u ,v < 2$,
\be{
\label{main-err-bd-l2}
\nm{f - \hat{f}}_{L^2_{\rho}} \lesssim_{u,v} \sqrt{b_{\phi}} \Bigg\{ \frac{\sigma_s(c_{[n]})_1 }{\sqrt{s}}
+  \frac{\sqrt{b_{\phi}}}{\sqrt{a_{\phi}}} \left [ \left ( \frac1s \sum_{j > s} \sigma_j(c)^u_2 \right )^{\frac1u} + \left ( \frac1n \sum_{j > n/2} \tau_j(c)^v_2 \right )^{\frac1v} \right ] \Bigg \}
}
and, for all $p \in (2,\infty]$,
\be{
\label{main-err-bd-lp}
\begin{split}
\nm{f - \hat{f}}_{L^{p}_{\rho}}  \lesssim_{u,v} (b_{\phi})^{1/p} (K_{\phi})^{1-2/p} \Bigg \{ &
\frac{\sigma_s(c_{[n]})_1 }{s^{\frac1p}}
+ \frac{\tau_n(c)_1}{s^{\frac1p}}  
\\
& +  \frac{s^{\frac12-\frac1p}\sqrt{b_{\phi}}}{\sqrt{a_{\phi}}} \left [ \left ( \frac1s \sum_{j > s} \sigma_j(c)^u_2 \right )^{\frac1u} + \left ( \frac1n \sum_{j > n/2} \tau_j(c)^v_2 \right )^{\frac1v} \right ] \Bigg \}.
\end{split}
}
}

Note that $\hat{f}$ is computed in exactly the same way as in Theorem \ref{t:basic-CS-thm}. See \S \ref{ss:the-decoder} for details.
Theorem \ref{t:main-res} replaces the term $\tau_n(c)_1$ appearing in \ef{bd-involving-tau} by two infinite sequences, involving the best $j$-term approximation error $\sigma_j(c)_2$ and the $j$-term approximation $\tau_j(c)_2$. Crucially, both are now with respect to the $\ell^2$-norm (which is equivalent to the $L^2_{\rho}$-norm).

We now discuss two examples that highlight the main benefit of this result: namely, it allows one to choose a much smaller $n$ than that that would arise from Theorem \ref{t:basic-CS-thm}.

\subsection{Example: weighted mixed Wiener spaces}\label{ss:weighted-wiener-intro}

As a first example, we consider the weighted Wiener spaces, which have been studied in various recent works \cite{moeller2026best,kolomoitsev2023sparse,nguyen2022s,moeller2023gelfand,moeller2025high,moeller2025sampling,jahn2023sampling,krieg2025sampling,moeller2024high}. Let $\mathbb{T}^d = [0,1)^d$ the $d$-dimensional torus, with the $L^2(\bbT^d)$-orthonormal Fourier basis
\begin{align}
\psi_k(x) = \E^{2\pi \I k \cdot x}, \qquad k \in \mathbb{Z}^d.
\label{psi-def}
\end{align}
For $f \in L^2(\bbT^d)$, we $f = \sum_{k \in \bbZ^d} \hat{f}_k \psi_k$ (with convergence in $L^2$), where its Fourier coefficients are given by
\begin{align}
\hat{f}_k = \int_{\bbT^d} f(x) \psi_{-k}(x)\, \D x, \qquad k \in \bbZ^d .
\label{fhat-moeller-def}
\end{align}

\begin{definition}[Weighted mixed Wiener space]
For $r > 0$ and $\theta \in (0,\infty]$ the \textit{weighted mixed Wiener space} $S^r_{\theta} \cA(\bbT^d) = S^r_{\theta} \cA : = \{ f \in L_1(\mathbb{T}^d) : \|f\|_{S^r_{\theta} \cA} < \infty \}$, where
\begin{align*}
\|f\|_{S^r_{\theta} \cA} = \begin{cases} \left( \displaystyle\sum_{k \in \mathbb{Z}^d} \prod_{i=1}^d (1+|k_i|)^{r \theta} |\hat{f}(k)|^{\theta} \right)^{1/\theta} & \theta < \infty, \\ \displaystyle\sup_{k \in \mathbb{Z}^d} \prod_{i=1}^d (1+|k_i|)^{r} |\hat{f}(k)| & \theta = \infty. \end{cases}
\end{align*}
\end{definition}

Since the Fourier basis is indexed over $\bbZ^d$, the truncated expansion $f_n$ of $f$ takes the form $f_n = \sum_{k \in \Lambda} \hat{f}_k \phi_k$ for some index set $\Lambda \subset \bbZ^d$ of size $|\Lambda| = n$. The following theorem shows there exists a good choice of $\Lambda$ for this problem, where $n$ is not too large. Here and elsewhere, we use the notation $(x)_+ = \max \{ x , 0 \}$ for $x \in \bbR$.

\thm{
\label{t:wiener-main}
Let $\theta \in (0,\infty]$, $r > (1-1/\theta)_+$ and $s \in \bbN$. Then there is a choice of $\Lambda \subseteq \bbZ^d$ with
\bes{
| \Lambda | = n = \begin{cases} \lceil s^{(r+1/\theta-1/2)/(r - (1-1/\theta)_+ ) } \rceil & r \leq 1/2
\\
\max \{ s , \lceil s^{(r+1/\theta-1/2)/r} \rceil \} & r > 1/2
\end{cases},
}
such that the following holds. Let $\varepsilon \in (0,1)$ and $x_1,\ldots,x_m \sim_{\mathrm{i.i.d.}} \rho$, where $\rho$ is the uniform measure on $\bbT^d$,
and $m$ satisfies
\bes{
m \geq c_{d,r,\theta} \cdot s \cdot \left ( \log^3(2s) + \log(2/\varepsilon) \right ),
}
where $c_{d,r,\theta} > 0$ depends on $d$, $r$, and $\theta$ only.
Then the following holds with probability at least $1-\varepsilon$. For any $f \in S^{r}_{\theta}\cA$ the approximation $\hat{f} = \cR_{s,n}(f)$ is at most $2s$-sparse and satisfies
\be{
\label{wiener-rate}
\nm{f - \hat{f}}_{L^p} \lesssim_{r,\theta} s^{1-1/p-1/\theta-r} \log^{(d-1)r}(s+1).
}
}

Note that $r > (1-1/\theta)_+$ is a necessary and sufficient condition for $S^{r}_{\theta}(\bbT^d) \hookrightarrow C(\bbT^d)$,
which ensures that pointwise evaluations of $f$ are well-defined, allowing for uniform recovery bounds that hold simultaneously for all $f \in S^r_{\theta} \cA$ given a single draw of the sample points. 

As we discuss further in \S \ref{ss:wiener-samp-widths}, the rate \ef{wiener-rate} is nearly optimal: it leads to an upper bound for the \textit{sampling width} $\varrho_m(S^{r}_{\theta} \cA)_{L^2}$ that is within a polylogarithmic factor of known lower bounds. Indeed, a known lower bound (see \cite[Lem.\ B.1]{jahn2023sampling}) for $\theta \in (0,2]$ is
\bes{
\varrho_m(S^{r}_{\theta} \cA)_{L^2} \gtrsim_{r,\theta} m^{1/2-1/\theta-r} \log^{(d-1) r}(m+1),
}
while our result gives the upper bound
\be{
\label{rho-m-upper-bound}
\varrho_m(S^{r}_{\theta} \cA)_{L^p} \lesssim_{r,\theta} m^{1-1/p-1/\theta-r} \log^{(d-1)r + 3(r+1/\theta+1/p-1)}(m+1),\quad \forall p \in [2,\infty].
}
As noted, many recent works have considered the spaces $S^{r}_{\theta} \cA$, with an algorithm achieving the upper bound \ef{rho-m-upper-bound} being recently introduced in \cite{moeller2026best}. This algorithm is based on a similar sparse recovery problem. The primary improvement of Theorem \ref{t:wiener-main} is the size $n$ of the truncation set $\Lambda$. As we discuss in \S \ref{ss:moller-comparison}, in \cite[Cor.\ 6.2]{moeller2026best}, the truncation set $\Lambda$ is chosen as
\bes{
\Lambda = [-M,M]^d \cap \bbZ^d,\quad \text{where } M \asymp s^{(r+1/\theta-1/2) / (r - (1-1/\theta)_{+} ) },
}
and therefore
\bes{
n = |\Lambda| \gtrsim s^{d(r+1/\theta-1/2) / (r - (1-1/\theta)_{+} ) }.
}
Theorem \ref{t:wiener-main} shows the same rate can be achieved with a truncation set that is at least a power of $d$ smaller. In particular, $n$ does not suffer from the curse of dimensionality. 

\subsection{Example: universal algorithms for anisotropic dominating mixed smoothness spaces}

In our second example, we let $\alpha = (\alpha_1,\ldots,\alpha_d) > 0$ be an \textit{anisotropy parameter} and consider the \textit{anisotropic dominating mixed smoothness Sobolev spaces}
\be{
\label{Halphamix}
H^{\alpha}_{\mathsf{mix}}(\bbT^d) = \left\{f\in L^2(\bbT^d): \|f\|^2_{H^\alpha_{\mathsf{mix}}} := \sum_{k \in \mathbb{Z}^d} \prod_{j=1}^d (1 + |k_j|)^{2\alpha_j} |\hat{f}_n|^2<\infty\right\}.
}
These spaces have been studied extensively \cite{temlyakov2018multivariate,dung2018hyperbolic,novak2008trac}. Sampling recovery and sampling widths of these function spaces, both linear and nonlinear, have attracted significant recent attention \cite{krieg2021function,krieg2021function2,nagel2022new,kammerer2021worst,dolbeault2023sharp,jahn2023sampling,dai2024random,kosov2025sampling,moeller2025high,moeller2026best,adcock2026universal,byrenheid2017optimal}.
Our focus is on the development of \textit{universal algorithms}, meaning algorithms that achieve near-optimal recovery rates \textit{simultaneously} for all possible values of $\alpha$, without a priori knowledge of the function's smoothness. Algorithms of this type were recently introduced in \cite{adcock2026universal}. We improve upon that work by obtaining significantly smaller truncation sets. Like \cite{adcock2026universal} our algorithm is nonadaptive, in contrast to adaptive approaches such as \cite{bartel2025learning}, which iteratively estimate the anisotropy from the data, and \cite{binev2005universal,binev2007universal}, which adaptively partition the domain.

For convenience, we now define
\be{
\label{h-p-def}
h(\alpha)=\min_{\substack{i\in[d]}}\alpha_i,\quad p(\alpha)=|\{j\in[d]:\alpha_j=h(\alpha)\}|,\qquad \alpha \in [0,\infty)^d,
}
We assume that $\alpha > 1/2$ (understood componentwise), as this ensures that $H^{\alpha}_{\mathsf{mix}}(\bbT^d) \hookrightarrow C(\bbT^d)$.

\thm{
\label{t:aniso-sob-main}
There there are constants $C(\alpha,d) > 0$ for all $\alpha > 1/2$ and, for each $s \in \bbN$, a choice of $\Lambda \subseteq \bbZ^d$ with 
\be{
\label{aniso-sob-n-new}
n = |\Lambda| \leq s \log^{d-1}(\E s),
}
such that the following holds. Let $\varepsilon \in (0,1)$ and $x_1,\ldots,x_m \sim_{\mathrm{i.i.d.}} \rho$, where $\rho$ is the uniform measure on $\bbT$ and $m$ satisfies
\be{
\label{m-cond-HC}
m \geq c_{d} \cdot s \cdot \left ( \log^3(2s) + \log(2/\varepsilon) \right ),
}
where $c_{d} > 0$ depends on $d$ only.
Then the following holds with probability at least $1-\varepsilon$. For any $\alpha > 1/2$ and $f \in H^{\alpha}_{\mathsf{mix}}(\bbT^d)$ the approximation $\hat{f} = \cR_{s,n}(f)$ is at most $2s$-sparse and satisfies
\be{
\label{Halpha-mix-rate}
\nm{f - \hat{f}}_{L^2} \lesssim_{d} \left ( \frac{\log^{p(\alpha)-1}(s)}{s} \right )^{h(\alpha)} \nm{f}_{H^{\alpha}_{\mathsf{mix}}}.
}
}

Note that the rate \ef{Halpha-mix-rate} is optimal and the sparse recovery procedure $\cR_{s,n}$ is independent of $\alpha$ (since $\Lambda$ is independent of $\alpha$). This result improves that of \cite{adcock2026universal} this result by significantly reducing the size of the truncation set and simplifying the overall construction. As we discuss in \S \ref{ss:Halphamix-comparison}, the index set $\Lambda$ used in \cite[Thm.\ 3.1]{adcock2026universal}  satisfies
\bes{
n = |\Lambda| \gtrsim_{d} s^{u(s)} (u(s) \log(s+1))^{d-1},
}
where $u(s)$ is some fixed, but arbitrary, increasing function of $s$ with $u(s) \rightarrow \infty$ as $s \rightarrow \infty$. In particular, $n$ grows superalgebraically with $s$ as $s \rightarrow \infty$.
By contrast, \ef{aniso-sob-n-new} is much smaller, i.e., linear in $s$ up to the log term.
Another by-product of this new bound is we also slightly reduce the measurement condition. In \cite[Thm.\ 3.1]{adcock2026universal} it is
\bes{
m \geq c_d \cdot s \cdot \left ( \log^3(2s) \cdot u(s) + \log(1/\varepsilon) \right ).
}
By using a smaller index set, we eliminate the factor $u(s)$.

\subsection{Additional contributions}

As noted, sparse approximation has been used notably for function approximation tasks and, in particular, to establish optimal algorithms for recovery in various function spaces. See \cite{adcock2024optimal,adcock2024optimalb,adcock2025optimal,adcock2026universal,jahn2023sampling,moeller2023gelfand,moeller2024high,moeller2025high,moeller2025sampling,moeller2026best} and references therein. Our work contributes in this direction by improving the error bound for sparse approximation, yielding, as discussed more efficient algorithms in terms of computational cost. Besides, we anticipate out main Theorem \ref{t:main-res} to be of independent theoretical interest, as it develops an error bound using only $\ell^2$-norm quantities.

While this is our main contribution, we also make three further contributions that are of independent interest.

\pbk
\textbf{(a) Improved recovery guarantees for bounded Riesz bases.} Sparse approximation in bounded Riesz bases was considered in \cite{brugiapaglia2021sparse} (see also \cite{jung2022estimation}). Our proofs use several results from this work to establish Theorem \ref{t:main-res}, notably, certain concentration and deviation bounds \cite[Thms.\ 1.1 \& 4.2]{brugiapaglia2021sparse} and \cite[Thm.\ 1.34]{jung2022estimation}. The main results in \cite{brugiapaglia2021sparse} are comparable to Theorem \ref{t:basic-CS-thm} in that they bound the truncation error using $\tau_n(c)_1$. However, we improve on \cite{brugiapaglia2021sparse} by establishing a sharper measurement condition. Specifically, \cite[Thm.\ 2.6]{brugiapaglia2021sparse}, when translated into our notation, gives a measurement condition of the form
\be{
\label{SB-comp}
m \geq c \cdot \left ( \frac{b_{\phi}}{a_{\phi}} \right )^2 \cdot K^2_{\phi} \cdot s \cdot  \log^2 \left (2 \frac{K^2_{\phi} b_{\phi} }{ a_{\phi}} s \right ) \cdot \log(2n) 
}
whenever $b_{\phi} \geq 1$ 
(note that the failure probability is treated slightly differently in \cite[Thm.\ 2.6]{brugiapaglia2021sparse}, so we ignore it for the purposes of comparison). In \ef{samp-rate} and \ef{samp-cond-2} we improve this to
\be{
\label{SB-comp-ours}
m \geq c \cdot \left( \frac{ K^2_{\phi} }{a_{\phi}} \right ) \cdot s \cdot  \log^2 \left (2 \frac{K^2_{\phi}}{ a_{\phi}} s \right ) \cdot \log(2n), 
}
thus reducing the dependence on the Riesz constants $a_{\phi},b_{\phi}$. In particular, the upper Riesz constant $b_{\phi}$ does not appear in our measurement condition, and the dependence on $a_{\phi}$ is lessened. It is notable that our result is invariant to scaling, i.e., the operation $\phi_i \rightarrow \gamma \phi_i$ leaves \ef{SB-comp-ours} unchanged. This is not the case for \ef{SB-comp}, which would increase by at least $\gamma^2$.

\pbk
\textbf{(b) Sparse approximations.} The solution of an $\ell^1$-minimization is generally not sparse. Hence, procedures based on $\ell^1$-minimization do not generically produce sparse approximations. A secondary contribution of this paper is to show that one can always postprocess the 
output of an $\ell^1$-minimization program by hard thresholding to obtain a genuinely sparse approximation. In Lemma~\ref{lem:threshold}, which is based on \cite[Cor.\ 3.2]{needell2010signal}, we show that hard thresholding an approximate solution does not substantially worsen the error, and the resulting sparse approximation satisfies the same error bounds up to numerical constants.

\pbk
\textbf{(c) General weighted Wiener-type spaces.} Our results in \S \ref{ss:weighted-wiener-intro} consider weighted mixed Wiener spaces $S^r_{\theta}\cA(\bbT^d)$, which have been studied in many recent works. However, our results in \S \ref{s:weighted-wiener} apply to general weighted Wiener-type spaces defined by arbitrary orthonormal or Riesz bases (i.e., not just the Fourier basis) and with general weights. In Theorem \ref{t:abstract-wiener-main} we present a general result for weights of the form $w_i = i^{r} / \log^t(i+1)$, from which Theorem \ref{t:wiener-main} follows as a special case. Thus are results are substantially more general than recent works \cite{moeller2026best,kolomoitsev2023sparse,nguyen2022s,moeller2023gelfand,moeller2025high,moeller2025sampling,jahn2023sampling,krieg2025sampling,moeller2024high}, which are specific to the spaces $S^r_{\theta}\cA(\bbT^d)$.

\subsection{Further discussion and related work}\label{ss:discussion}

Our work is inspired by Krieg \& Ullrich's contributions \cite{krieg2021function,krieg2021function2}. In a pair of breakthrough papers they showed that function values are essentially as powerful as arbitrary linear information for $L^2$ recovery, by proving that the sampling numbers are upper bounded by tail averages of the approximation numbers of the embedding. This resolved the open question of whether function values achieve the same rate of convergence as optimal linear information for $L^2$ recovery. Subsequent works \cite{nagel2022new,kammerer2021worst,dolbeault2023sharp} built upon and refined these results, proving similar bounds with improved logarithmic factors. 
These works use (weighted) linear least-squares estimators as their recovery algorithm, and thus are applicable to function classes for which linear algorithms are optimal. 
Our work uses nonlinear sparse approximation, and is applicable to spaces such as the mixed Wiener spaces and universal recovery in the anisotropic Sobolev spaces, where nonlinear algorithms are required in order to achieve optimal rates. 

More concretely, and using our notation, the results of \cite{krieg2021function,krieg2021function2} establish error bounds involving the $j$-term approximation errors $\tau_j(c)$ only: namely,
\be{
\label{krieg-ullrich-bound}
\nm{f - \hat{f} }_{L^2_{\rho}} \lesssim_{v} \sqrt{\frac{b_{\phi}}{a_{\phi}}} \left ( \frac1n \sum_{j > n/2} \tau_j(c)^v_2 \right )^{\frac1v} 
}
where $\hat{f}$ is a linear least-squares estimator, subject a number of i.i.d.\ samples scaling log-linearly in $n$, i.e.,
\bes{
m \geq c \cdot \left ( \frac{K^2_{\phi}}{a_{\phi}} \right ) \cdot n \cdot  \log \left (2n/\varepsilon  \right ).
}
(note that \cite{krieg2021function,krieg2021function2} also show the existence of a set of sample points $x_1,\ldots,x_m$ for $m \geq c n$ samples suffice). Our result involves a measurement condition \ef{samp-cond-2} that is linear in the sparsity $s$, and only logarithmic in $n$, with additional terms involving the best $s$- and $j$-term approximation errors $\sigma_s(c)_1$ and $\sigma_j(c)_2$, $j > s$.

The results in \cite{krieg2021function,krieg2021function2} are obtained by (i) carefully decomposing the tail term $e$ in \ef{bAe-def} into dyadic sums, and (ii) making repeated use of Chernoff bounds to estimate the norms of the ensuing matrices. In combination with a careful balancing of parameters, this leads to the desired error bound \ef{krieg-ullrich-bound}. Our approach is similar, although substantially more involved as we deal nonlinear best $s$-term approximation errors $\sigma_s(c)_p$ in addition to the linear $n$-term approximation errors $\tau_n(c)_p$. Instead of (i), we partition the tail term into a carefully-constructed countable collection of sparse index sets lying within certain dyadic ranges. And instead of (ii) we use sophisticated deviation bounds (whose proofs are based on advanced chaining techniques) to upper bound the action of the resulting matrices on arbitrary sparse vectors. See \S \ref{ss:our-technique} for an overview of our proof.

As noted, our work is in part motivated by computational considerations. Notably, iterative algorithms for sparse approximation incur a computational cost of the form $\ord{T m n}$, where $T$ is the number of iterations. In OMP, for instance, $T = \ord{s}$. In this work, we consider the SR-LASSO optimization program. This was introduced in statistics in \cite{belloni2011square-root} and first used in the context of function approximation in \cite{adcock2019correcting}. See also \cite{adcock2024efficient,moeller2025high,moeller2025sampling}.  As shown in \cite{adcock2024efficient}, it can be solved efficiently using the primal-dual iteration \cite{chambolle2011first,chambolle2016ergodic} combined with a restart scheme \cite{adcock2025restarts}. In this case, $T = \ord{\log(1/\varepsilon)}$ iterations suffice to achieve an error within $\varepsilon$ of the exact minimizer.
Crucially, in all such approaches, the truncation set size $n$ directly determines the size of the matrix $A$ and hence the computational cost, which is one of the main motivations for keeping $n$ small. We remark in passing that \textit{sublinear time} algorithms \cite{choi2021sparse1,choi2021sparse2} can avoid incurring a cost scaling with $n$. However, these use specialized sample points, and to date, do not attain optimal error bounds under a sampling budget scaling log-linearly with the sparsity $s$.

\subsection{Outline}

The outline of the remainder of this paper is as follows. In \S \ref{s:key-CS-tools}, we introduce some key compressed sensing tools that are used in our analysis. In \S \ref{s:main-proofs} we establish the main result, Theorem \ref{t:main-res}. In \S \ref{s:weighted-wiener} and \S \ref{s:anisotropic-sobolev} we consider its application to weighted Wiener-type spaces and anisotropic Sobolev spaces, respectively. Finally, we close with a conclusion in \S \ref{s:conclusion}.

\section{Key compressed sensing tools}\label{s:key-CS-tools}

We now introduce some key compressed sensing tools. Here and elsewhere we use the notation $[n] : = \{1,\ldots,n\}$ for $n \in \bbN$ and $[n_1,n_2] := \{n_1,\ldots,n_2\}$ for $n_1,n_2 \in \bbN$, $n_1 < n_2$.

\subsection{The sparse approximation procedure}\label{ss:the-decoder}

We first specify how the sparse approximation $\hat{f}$ is computed. As noted, there are many ways to do this. Following \cite{belloni2011square-root}, we will employ the so-called \textit{Square-Root LASSO} decoder. Given a regularization parameter $\lambda > 0$, a matrix $A \in \mathbb{C}^{m \times N}$ and vector $b \in \mathbb{C}^m$, the \textit{(unconstrained) Square Root (SR)-LASSO problem} is the optimization problem
\be{
\label{eq:srlasso}
\min_{z \in \bbC^n} \lambda \nm{z}_1 + \nm{A z - b}_2,
}
The advantage of this decoder over, say, the classical LASSO is that a theoretically-optimal choice of $\lambda$ is independent of the noise term $e$ in \ef{CS-prob} which, in our setting, depends on the unknown expansion tail $f-f_n$.

The program \ef{eq:srlasso} does not generally yield sparse solutions. To obtain a $2s$-sparse approximation, we hard threshold the coefficients. Let $H_{2s} : \bbC^n \rightarrow \bbC^n$ be the hard-thresholding operator, i.e., for $z = (z_i)^{n}_{i=1} \in \bbC^n$, $H_{2s}(z)$ is the $2s$-sparse vector with $i$th entry $z_i$ if $|z_i|$ is one of the $2s$ largest entries of $z$ in absolute value and zero otherwise. A secondary contribution of this work shows that solutions of \ef{eq:srlasso} that are postprocessed by $H_{2s}$ still satisfy the same error bounds, up to constants.

Another potential complication is that \ef{eq:srlasso} generally has infinitely-many solutions. This is easily dealt with by picking one. In practice, this could be the output of some optimization algorithm for solving \ef{eq:srlasso}. But, theoretically, arguably the simplest choice is to pick the solution with the minimal $\ell^2$-norm (which is unique, as the $\ell^2$-norm is strictly convex and the set of minimizers of \ef{eq:srlasso} is a nonempty, closed and convex set).

\pbk
\textbf{The sparse approximation procedure in Theorems \ref{t:basic-CS-thm} and \ref{t:main-res}.}  We now specify this procedure. Given samples \ef{f-samples} of a function $f$ and $A,b$ as in \ef{CS-prob}, we define $\hat{f} = \sum^{N}_{i=1} \hat{c}_i \phi_i$, where $\hat{c} = H_{2s}(\check{c})$ and
\bes{
\check{c} = \argmin{} \left \{ \nm{\tilde{c}}_{2} : \text{$\tilde{c}$ is a minimizer of \ef{eq:srlasso} with } \lambda = \frac{3}{14} \sqrt{\frac{a_{\phi} }{ s}} \right \}.
}
Note that the choice of $\lambda$ is based on the theory we develop later in this section.

\rem{[Why $2s$ and not $s$]
\label{rem:2s-s-why}
The choice of $2s$ leads to a slightly more appealing error bound. As can be seen in the proofs of the main results in \S \ref{s:main-proofs}, if one were to use $s$-sparse approximation $\hat{c} = H_s(\check{c})$ then the error bounds in Theorems \ref{t:basic-CS-thm} and \ref{t:main-res} would involve an additional term of the form $s^{\frac12-\frac1p}\sigma_s(c)_2$.
}

\subsection{Preliminaries}

We now recap some preliminary compressed sensing concepts. See, e.g., \cite[Defn.\ 5.14, Lem.\ 5.15 \& 5.16]{adcock2021compressive}.

\defn{[$\ell^2$-rNSP]
Given $s\in \mathbb{N}$, a matrix $A\in\mathbb{C}^{m\times N}$ satisfies the \textit{$\ell^2$-robust Null Space Property ($\ell^2$-rNSP) of order $s$ with constants $\rho \in (0,1)$ and $\tau > 0$}
if, for all $z\in \mathbb{C}^N$ and all index sets $S\subseteq[N]$ with $|S|=s$,
\begin{align}\label{l2r}
\|z_S\|_2 \leq \frac{\rho}{\sqrt{s}} \|z_{S^c}\|_1 + \tau \|A z\|_2.
\end{align}
}

\lem{[rNSP implies stable and accurate recovery for the SR-LASSO problem]\label{l:srlasso_implies_stable_and_accurate_recovery}
Let $A \in \mathbb{C}^{m \times N}$ satisfy the rNSP of order $s$ with constants $0 < \rho < 1$ and $\tau > 0$. 
Let $x \in \mathbb{C}^N$, $h \in \mathbb{C}^m$, and $b = A x + e \in \mathbb{C}^m$. 
Then, for any $\lambda$ satisfying $\lambda \in \left(0,\frac{D}{\sqrt s}\right]$ where
$
D = \frac{(1+\rho)}{(3+\rho)\tau}$,
and any $\hat{x}$ such that
\bes{
\hat{x} \in \argmin{z \in \mathbb{C}^N}~\lambda \|z\|_1 + \|A z - b\|_2,
}
we have
\begin{align*}
\|x - \hat{x} \|_1 
&\le C_1 \sigma_s(x)_1 
+ \frac{1}{2}\left( \frac{C_1}{\lambda} + C_2\sqrt{s} \right)\|e\|_2,
\quad
\|x - \hat{x}\|_2 
&\le C_3 \frac{\sigma_s(x)_1}{\sqrt{s}} 
+ \frac{1}{2}\left( \frac{C_3}{\sqrt{s}\lambda} + C_4 \right)\|e\|_2,
\end{align*}
where the constants $C_1, C_2, C_3, C_4$ are given by
\bes{
C_1 = 2\left( \frac{1+\rho}{1-\rho} \right), \quad
C_2 = \frac{4 \tau}{1-\rho}, \quad
C_3 = \frac{2(1 + \rho)^2}{(1 - \rho)}, \quad
C_4 = 2\tau \frac{(3 + \rho)}{(1 - \rho)}.
}
}

Next, we require the following result, which shows that postprocessing an approximation $\check{c}$ to a vector $c$ by hard thresholding does not substantially increase the error. This is based on \cite[Cor.\ 3.2]{needell2010signal}. We include a short proof for completeness.

\lem{
\label{lem:threshold}
Let $c,\check{c} \in \bbC^n$ and $\hat{c} = H_s(\check{c})$. Then, for any $1 \leq p \leq \infty$,
\bes{
\nm{c - \hat{c}}_p \leq 3 \nm{c - \check{c}}_p + 3 \sigma_s(c)_p.
}
}
\prf{
Let $S,T \subseteq [n]$, $|S|,|T| \leq s$ be such that $\hat{c} = \check{c}_S$ and $H_s(c) = c_T$. In particular, $\sigma_s(c)_p = \nm{c - c_T }_p = \nm{c_{T^c}}_p$. Then
\eas{
\nm{c - \hat{c}}_p = \nm{c - \check{c}_S}_{p} \leq \nm{ c_T - \hat{c}_S}_p + \nm{c_{T^c}}_p
& \leq  \nm{( c_T - \check{c}_S )_S}_p  +  \nm{c_{T \backslash S}}_p + \sigma_s(c)_p
 \\
 & = \nm{(c_T - \check{c})_S}_p +  \nm{c_{T \backslash S}}_p + \sigma_s(c)_p
 \\
 & \leq \nm{c_T - \check{c}}_p + \nm{c_{T \backslash S}}_p + \sigma_s(c)_p
 \\
 & \leq \nm{c - \check{c} }_p + \nm{c_{T \backslash S}}_p + 2\sigma_s(c)_p.
}
Now consider the second term. We have
\bes{
\nm{c_{T \backslash S}}_p  \leq \nm{(c-\check{c})_{T \backslash S}}_p + \nm{\check{c}_{T \backslash S}}_p \leq \nm{c - \check{c}}_p + \nm{\check{c}_{T \backslash S}}_p.
}
Observe that $|S| = |T| = s$ and therefore $|S \backslash T| = | T \backslash S |$. Since $S$ contains the largest $s$ entries of $\check{c}$ in absolute value, we must have $\nm{\check{c}_{T \backslash S}}_p \leq \nm{\check{c}_{S \backslash T}}_p$.
We deduce that
\bes{
\nm{\check{c}_{T \backslash S}}_p \leq \nm{\check{c}_{S \backslash T}}_p = \nm{(\check{c}-c_T)_{S \backslash T}}_p \leq \nm{\check{c} - c_T}_{p} \leq \nm{\check{c} -c }_2 + \sigma_s(c)_p.
}
Hence $\nm{c_{T \backslash S}}_p  \leq 2 \nm{c - \check{c}}_p + \sigma_s(c)_p$.
We now combine this with the above inequality.
}

\subsection{The rNSP for bounded Riesz systems}

We now consider when the rNSP holds for random sampling with bounded Riesz systems. As in \S \ref{s:intro}, we now let $(D,\cD,\rho)$ be a probability space and $L^2_{\rho}(D)$ be the space of square-integrable functions $f : D \rightarrow \bbC$.  

\thm{[rNSP for random sampling in bounded Riesz systems]
\label{t:bounded-riesz-rNSP}
There exists universal constants $c,c' >0$ such that the following holds. Let $0 < \delta , \varepsilon < 1$, $n \in \bbN$ and $\{ \phi_i \}^{n}_{i =1} \subset L^2_{\rho}(D) \cap L^{\infty}_{\rho}(D)$ be linearly independent, and define
\bes{
a_{\phi} = \inf_{\substack{c \in \bbC^n \\ \nm{c}_2 = 1}} \nms{\sum^{n}_{i=1} c_i \phi_i}_{L^2_{\rho}} > 0,\qquad K_{\phi} = \max_{i =1,\ldots,n} \nm{\phi_i}_{L^{\infty}_{\rho}} < \infty.
}
Let $x_1,\ldots,x_m \sim_{\mathrm{i.i.d.}} \rho$ and consider the matrix $A = \frac{1}{\sqrt{m}} \left ( \phi_j(x_i) \right )^{m,n}_{i,j=1} \in \bbC^{m \times n}$, 
where $m$ satisfies
\be{
\label{m-cond-riesz-rNSP}
m \geq c \frac{(1+1/\rho)^2}{(1-1/\tau^2)^{2}} \frac{K^2_{\phi}}{a_{\phi}}  s \left [ \log(2 N) \log^2 \left ( \frac{ 2  (1+1/\rho)^2}{(1-1/\tau^2)} \frac{K^2_{\phi}}{a_{\phi}} s \right ) \log^2 \left (\frac{2 }{(1-1/\tau^2)} \right ) + \log(1/\varepsilon) \right ]
}
for some $0 < \rho < 1$ and $\tau > 1$. Then, with probability at least $1-\varepsilon$, $A$ has the rNSP of order $s$ with constants $\rho$ and $\tau/\sqrt{a_{\phi}}$. In particular, $A$ has the rNSP of order $s$ with constants $\rho = 1/2$ and $\tau = 2 / \sqrt{a_{\phi}}$ with probability at least $1-\varepsilon$, provided
\bes{
m \geq c' \frac{K^2_{\phi}}{a_\phi} s \left [ \log(2 n) \log^2 \left ( \frac{ 2  K^2_{\phi}}{a_{\phi}} s \right ) + \log(1/\varepsilon) \right ].
}
}

To prove this theorem, we require the following two results.

\thm{
\label{t:simone-1}
There exist absolute constants $c_0,c_1,c_2 >0$ and $\kappa \in (0,1)$ such that the following holds. Let $X_1,\ldots,X_m$ be independent copies of a random vector $X \in \bbC^N$ with bounded coordinates, i.e., for all $i = 1,\ldots,N$ we have $| \ip{X}{e_i}| \leq K$ for some $K > 0$ where $e_1,\ldots,e_N$ is the standard basis of $\bbC^N$. Let $T \subseteq \sqrt{s} B^{N}_{1}$, where $B^N_1 = \{ x \in \bbC^N : \nm{x}_1 \leq 1 \}$, $\delta \in (0,\kappa)$ and assume that
\bes{
m \geq c_0 K^2 \delta^{-2} s \log(\E N) \log^2(s K^2/\delta) \log^2(1/\delta).
}
Then with probability exceeding $1-2 \exp(-c_1 \delta^2 m / (s K^2))$,
\bes{
\sup_{f \in T}  \left | \frac1m\sum^{m}_{i=1} | \ip{f}{X_i}|^2 - \bbE | \ip{f}{X} |^2 \right | \leq c_2 \left ( \delta + \delta \sup_{f \in T} \bbE | \ip{f}{X} |^2 \right ).
}
}

See \cite[Thm.\ 1.34]{jung2022estimation}.\footnote{This result first appeared in \cite[Thm.\ 1.1]{brugiapaglia2021sparse} with an incorrect dependence on $\delta$ in the main condition on $m$, before being subsequently corrected in \cite{jung2022estimation}. We also amend Theorem \ref{t:simone-2} in the same way.}
We also require the following lemma, which can be found in \cite{brugiapaglia2021sparse} (we give a short proof for completeness). 

\lem{
Let $1 \leq s \leq N$, $0 < \rho < 1$, $A \in \bbC^{m \times N}$, $B \in \bbC^{N \times N}$ be nonsingular and
\be{
\label{T-def}
T = \left \{ x \in \bbC^N : \exists S \subseteq [N],\ |S| \leq s,\ \nm{x_S}_{2} \geq \frac{\rho}{\sqrt{s}} \nm{x_{S^c}}_{1} \right \}.
}
Suppose that
\bes{
\inf_{\substack{x \in T \\ \nm{B x}_2 = 1}} \nm{A x}_{2} \geq 1/\tau,
}
for some $\tau > 0$. Then $A$ has the rNSP of order $s$ with constants $\rho$ and $\tau/s_N(B)$, where $s_N(B) > 0$ is the minimum singular value of $B$.
} 
\prf{
Let $x \in \bbC^N$ and suppose first that $x \in T$. Then $x / \nm{B x}_2 \in T$ and therefore we have
\bes{
s_{N}(B) \nm{x}_2 \leq \nm{B x}_2 \leq \tau \nm{A x}_2.
}
Let $S \subseteq [N]$, $|S| = s$. Then
\bes{
\nm{x_S}_2 \leq \nm{x}_2 \leq \frac{\tau}{s_N(B)} \nm{A x}_2 \leq \frac{\rho}{\sqrt{s}} \nm{x_{S^c}}_1 + \frac{\tau}{s_N(B)} \nm{A x}_2.
}
Hence $x$ satisfies the desired condition for the rNSP. Suppose next that $x \notin T$. Then, for any $S \subseteq [N]$, $|S| = s$, we have
\bes{
\nm{x_S}_2 < \frac{\rho}{\sqrt{s}} \nm{x_{S^c}}_1  \leq \frac{\rho}{\sqrt{s}} \nm{x_{S^c}}_1 + \frac{\tau}{s_N(B)} \nm{A x}_2.
}
Hence $x$ also satisfies the desired condition. The result follows.
}

\prf{[Proof of Theorem \ref{t:bounded-riesz-rNSP}]
The matrix $A$ satisfies
\be{
\label{AstarA-gram}
\bbE(A^*A) = G : =\left ( \ip{\phi_i}{\phi_j}_{L^2_{\rho}} \right )^{N}_{i,j=1} \in \bbC^{N \times N}.
}
The matrix $G$ is the Gram matrix of the first $N$ basis functions, and is positive definite due to linear independence. Let $B$ be its unique positive definite square-root and notice that $s_N(B) = \sqrt{\lambda_N(G)} \geq \sqrt{a_{\phi}}$, where $\lambda_N(G)$ denotes the $N$th (and smallest) eigenvalue of $G$ and $a_{\phi}$ is as in \ef{riesz-basis}. 
By the previous lemma, we want to show that
\bes{
I := \inf_{\substack{x \in T \\ \nm{B x}_2 = 1}} \nm{A x}_{2} \geq 1/\tau
}
with probability at least $1-\varepsilon$, where $T$ is as in \ef{T-def}. By \ef{AstarA-gram} and the definition of $B$, we have $\bbE \nm{A x}^2_2 = \nm{B x}^2_2$. Hence
\bes{
I^2 \geq 1 - \sup_{\substack{x \in T \\ \nm{B x}_2 = 1}} \left | \nm{A x}^2_{2} - \bbE \nm{A x}^2_2 \right | =: 1 - J.
}
To estimate $J$, we aim to use Theorem \ref{t:simone-1}. Define the random vector $X = (\phi_i(x))^{m}_{i=1} \in \bbC^m$, where $x \sim \rho$. Notice that
\be{
\label{A-relate-to-X}
\nm{A x}^2_{2} - \bbE \nm{A x}^2_2 = \frac1m \sum^{m}_{i=1} | \ip{x}{X_i} |^2 - \bbE | \ip{x}{X} |^2
}
and that 
\be{
\label{X-inf-norm}
\nm{X}_{\infty} \leq \max_{i \in [m]} \nm{\phi_i}_{L^{\infty}_{\rho}} \leq K_{\phi}.
}
Now let $x \in T$ with $\nm{B x}_2 = 1$. Then there is a set $S \subseteq [N]$, $|S| \leq s$ such that $\nm{x_S}_2 \geq \frac{\rho}{\sqrt{s}} \nm{x_{S^c}}_1$. Hence
\eas{
\nm{x}_1  = \nm{x_S}_1 + \nm{x_{S^c}}_1 
\leq \sqrt{s} \nm{x_S}_2 + \frac{\sqrt{s}}{\rho} \nm{x_S}_2 
\leq \sqrt{s} \left ( 1 + \frac{1}{\rho} \right ) \nm{x}_2
 \leq \frac{\sqrt{s}}{\sqrt{a_{\phi}}}\left ( 1 + \frac{1}{\rho} \right )
}
We deduce that
\bes{
T \cap \{ x : \nm{B x}_2 = 1 \} \subseteq \frac{\sqrt{s}}{\sqrt{a_{\phi}}}\left ( 1 + \frac{1}{\rho} \right ) B^{N}_1.
}
We now apply Theorem \ref{t:simone-1} with $K = K_{\phi}$, $s$ replaced by $\frac{s}{a_{\phi}} \left ( 1 + \frac{1}{\rho} \right )^2$ and $T$ replaced by $T \cap \{ x : \nm{B x}_2 = 1 \}$. This asserts that if
\bes{
m \geq c_0 K^2_{\phi} \delta^{-2} a^{-1}_{\phi} \left ( 1 + \frac{1}{\rho} \right )^2 s \log(\E N) \log^2 \left ( \frac{s K^2_{\phi} \left ( 1 + \frac{1}{\rho} \right )^2 }{a_{\phi} \delta} \right ) \log^2(1/\delta)
}
Then 
\bes{
J \leq c_2 \delta \left ( 1 + \sup_{\substack{x \in T \\ \nm{B x}_2 = 1}} \bbE \nm{A x}^2_2 \right ) \leq 2 c_2 \delta
}
with probability at least $1-p$, where
\bes{
p = 2 \exp \left ( - \frac{c_1 a_{\phi} \delta^2 m}{s (1+1/\rho)^2 K^2_{\phi} } \right ).
}
We deduce that
$I^2 \geq 1 - 2 c_2 \delta$
with the same probability. Hence $I \geq 1/\tau$ with the same probability, provided $
\delta \leq \frac{1-1/\tau^2}{2 c_2}$.
Without loss of generality, we may assume that $\frac{1}{2 c_2} < \kappa$. Hence, we now set $\delta = \frac{1-1/\tau^2}{2 c_2 }$. It follows that $I \geq 1/\tau$ with probability at least $1-p$, where
\bes{
p = 2 \exp \left ( - \frac{c'_1 a_{\phi} m (1-1/\tau^2)^2}{s(1+1/\rho)^2  K^2_{\phi}} \right ),
}
provided
\bes{
m \geq c'_0\frac{(1+1/\rho)^2}{(1-1/\tau^2)^{2}}   \frac{K^2_{\phi}}{a_{\phi}} s \log(\E N) \log^2 \left (  \frac{ 2 c_2 (1+1/\rho)^2}{(1-1/\tau^2)} \frac{K^2_{\phi}}{a_{\phi}} s \right ) \log^2 \left (\frac{2 c_2}{(1-1/\tau^2)} \right ).
}
Now observe that $K_{\phi} = \sup_{i \in \bbN} \nm{\phi_i}_{L^{\infty}_{\rho}} \geq \sup_{i \in \bbN} \nm{\phi_i}_{L^2_{\rho}} \geq \sqrt{a_{\phi}}$, since $\rho$ is a probability measure and $\{ \phi_i \}_{i \in \bbN}$ is a Riesz basis. Therefore, the condition on $m$ can be replaced by
\bes{
m \geq c \frac{(1+1/\rho)^2}{(1-1/\tau^2)^{2}} \frac{K^2_{\phi}}{a_{\phi}}  s \log(2 N) \log^2 \left ( \frac{ 2  (1+1/\rho)^2}{(1-1/\tau^2)} \frac{K^2_{\phi}}{a_{\phi}} s \right ) \log^2 \left (\frac{2 }{(1-1/\tau^2)} \right ).
}
This condition is implied by \ef{m-cond-riesz-rNSP}. Hence we deduce that $A$ has the rNSP with constant $\rho$ and $\tau/\sqrt{a_{\phi}}$, with probability at least $1-p$, where $p$ is as above. However, \ef{m-cond-riesz-rNSP} also implies that $p \leq \varepsilon$. The result now follows.
}

\subsection{A deviation bound for bounded Riesz systems}

Another component of our analysis is the following estimate, that bounds how much $\nm{A x}^2_2$ can exceed its mean for sparse vectors. Notice that this result does not place a condition on $m$ such as \ef{m-cond-riesz-rNSP}. This is crucial in our later estimates.

\thm{
\label{t:bounded-riesz-deviation}
There exist universal constants $c_1,c_2,c_3>0$ with $c_2\leq1$ such that the following holds. Let $\{ \phi_i \}^{n}_{i=1}$, $a_{\phi}$, $K_{\phi}$ and $A$ be as in Theorem \ref{t:bounded-riesz-rNSP}.
Let $1 \leq s \leq n$, $m \in \bbN$, $g > 0$, $B$ be the unique positive definite square-root of the matrix $\bbE(A^*A)$ and $T = \{ x \in \bbC^n : \nm{x}_0 \leq s,\ \nm{B x}_2 = 1 \}$. Suppose that
\be{
\label{Exp-cond}
\frac{s  K^2_{\phi} \log^2(2 c_1K^2_{\phi} s/a_\phi ) \log(2 n) }{a_{\phi} m} + 1  \leq c_2 g.
}
Then
\bes{
\bbP \left ( \sup_{x \in T} \nm{A x}_2 \geq \sqrt{g+1} \right ) \leq 2 \exp \left ( - c_3 g \frac{a_{\phi} m}{K^2_{\phi} s }\right )
}
}

To prove the above theorem, we require the following result (see \cite[Thm.\ 4.2]{brugiapaglia2021sparse}).

\thm{
\label{t:simone-2}
There exist absolute constants $c_1,c_2,c_3>0$ such that the following holds. Let $X,X_1,\ldots,X_m$ be as in Theorem \ref{t:simone-1}, $T \subseteq \sqrt{s} B^N_{1}$, and $\delta \in (0,1)$. Then
\eas{
\bbE & \sup_{f \in T}  \left | \frac1m\sum^{m}_{i=1} | \ip{f}{X_i}|^2 - \bbE | \ip{f}{X} |^2 \right | 
\\
& \leq c_1 \sqrt{\frac{s K^2 \log^2(s K^2 / \delta) \log(\E N) \log^2(1/\delta)}{m} } \sqrt{ \bbE \sup_{f \in T} \frac1m  \sum^{m}_{i=1} | \ip{f}{X_i}|^2 } 
\\
& ~~~ + c_2 \delta\ \bbE \sup_{f \in T} \frac1m \sum^{m}_{i=1} | \ip{f}{X_i} |^2 + c_3 \delta.
}
}

\prf{[Proof of Theorem \ref{t:bounded-riesz-deviation}]
Define the random variable $Z = \sup_{x \in T} \left | \nm{A x}^2_2 - \nm{B x}^2_2 \right |$.
We first bound $\bbE(Z)$.
As in the previous proof, let  $X = (\phi_i(x))^{m}_{i=1}$ for $x \sim \rho$ and $X_1,\ldots,X_m$ be independent copies of $X$. Recall that \ef{A-relate-to-X} and \ef{X-inf-norm} hold and observe that $T \subseteq \sqrt{s/a_{\phi}} B^n_{1}$. 
Hence Theorem \ref{t:simone-2} gives that
\bes{
\bbE(Z) \leq C_1 \sqrt{\frac{s K^2_{\phi} \log^2\left(s K^2_{\phi}/(a_{\phi}\delta)\right) \log^2(1/\delta) \log(\E n) }{a_{\phi} m} } \sqrt{\bbE \sup_{x \in T} \nm{A x}^2_2 } + C_2 \delta \bbE \sup_{x \in T} \nm{A x }^2_2 + C_3 \delta
}
for any $\delta \in (0,1)$ and some universal constants $C_1,C_2,C_3>0$.
Observe that $\bbE \sup_{x \in T} \nm{A x}^2_2 \leq \bbE(Z) + 1$.
Hence 
\bes{
\bbE(Z) \leq C_1 \sqrt{\frac{s K^2_{\phi} \log^2\left(s K^2_{\phi}/(a_{\phi}\delta)\right) \log^2(1/\delta) \log(\E n) }{a_{\phi} m} } \left ( \sqrt{\bbE(Z)} + 1 \right ) + C_2 \delta (\bbE(Z)+1) + C_3 \delta.
}
Assuming without loss of generality that $C_2 \geq 1$, we now pick $\delta = 1/(2 C_2)$ to  obtain
\bes{
\bbE(Z) \leq C_4 \sqrt{\frac{s K^2_{\phi} \log^2\left(2C_2s K^2_{\phi}/a_{\phi}\right) \log(\E n) }{a_{\phi} m} } \left ( \sqrt{\bbE(Z)} + 1 \right ) + C_5,
}
for some universal constants $C_4, C_5 > 0$.
This is a quadratic inequality for $\bbE(Z)$. Completing the square, we deduce that
\be{
\label{EZ-bd}
\bbE(Z) \leq c \left ( \frac{s K^2_{\phi} \log^2\left(2C_2s K^2_{\phi}/a_{\phi}\right) \log(\E n) }{a_{\phi} m} + 1 \right ),
}
for some universal constant $c>0$.
Having done this, we now look to bound $Z$ in probability. For this we use Talagrand's concentration inequality. See, e.g., \cite[Thm.\ 4.1]{brugiapaglia2021sparse}. For this example, we have $\cF = \{ | \ip{x}{\cdot} |^2 / m : x \in T \}$, $\sigma^2_{\cF} \leq K^2_{\phi} s / (a_{\phi} m)$ and $\beta_{\cF} \leq K^2_{\phi} s / (a_{\phi} m )$. Therefore
\bes{
\bbP \left (Z \geq \bbE(Z) + \sqrt{2 \frac{u K^2_{\phi} s}{a_{\phi} m} ( 1 + 2 \bbE(Z)) } + \frac13 \frac{K^2 s }{a_{\phi} m} u \right ) \leq 2 \exp(-u),\quad \forall u > 0.
}
Now, let $u_0 > 0$ be the unique solution of $\sqrt{2 u_0 K^2_{\phi} s / (a_{\phi} m) (1 + 2\bbE(Z)))} + K^2_{\phi} s u_0/ (3a_{\phi} m)  = g/2$. Then $\bbP(Z \geq \bbE(Z) + g/2) \leq 2\exp(-u_0)$ and we have
\bes{\label{eq:u0-lower-bound}
u_0\ge \frac{g^2/4}{2K^2_{\phi} s / (a_{\phi} m)(1+2\bbE(Z)) + K^2_{\phi} s g/ (3a_{\phi} m)}.
}
Further, we have $Z \geq \sup_{x \in T} \nm{A x}^2_2 - 1$.
Hence
\bes{
\bbP \left ( \sup_{x \in T} \nm{A x}_2 \geq \sqrt{g+1} \right ) \leq \bbP \left ( Z \geq g \right )
}
Now suppose that $\bbE(Z) \leq g/2$,
which, due to \ef{EZ-bd}, is implied by \ef{Exp-cond} with $c_1=C_2$ and $c_2\le\min\{1,1/2c\}$. Moreover, since the left-hand side of \ef{Exp-cond} is at least $1$, we have $c_2g \geq 1$. Hence $c_2 \leq  1$ implies $g \geq  1$. This along with $\bbE(Z) \leq g/2$ implies that  $1 + 2\bbE(Z) \leq 1 + g \leq 2g$. Hence, by \eqref{eq:u0-lower-bound}, $u_0$ satisfies
\begin{align*}
    u_0\geq \frac{3g}{52} \cdot\frac{a_{\phi} m}{K^2_{\phi} s}.
\end{align*}
Then, for an appropriate constant $c_3>0$, we have 
\bes{
\bbP \left ( \sup_{x \in T} \nm{A x}_2 \geq \sqrt{g+1} \right ) \leq \bbP \left ( Z \geq g \right ) \leq \bbP \left ( Z \geq \bbE(Z) + \frac{g}{2} \right ) \leq 2 \exp \left ( - c_3 g \frac{a_{\phi} m}{K^2_{\phi} s }\right ),
}
as required.
}

\section{Proof of the main result}\label{s:main-proofs}

In this section, we prove the main result of the paper, Theorem \ref{t:main-res}. 

\subsection{Proof of Theorem \ref{t:basic-CS-thm}}

For completeness, we also give a short proof of Theorem \ref{t:basic-CS-thm} as it will inform the proof of Theorem \ref{t:main-res} later.

\prf{[Proof of Theorem \ref{t:basic-CS-thm}]
Let $E$ be the event that $A$ has the rNSP of order $s$ with constants $\rho = 1/2$ and $\tau = 2 / \sqrt{a_{\phi}}$. If $E$ occurs, then Lemma \ref{l:srlasso_implies_stable_and_accurate_recovery} gives that
\be{
\label{step-bounds}
\| c_{[n]} - \check{c} \|_1 
\lesssim   \sigma_s(c_{[n]})_1 
+  \frac{\sqrt{s}}{\sqrt{a_{\phi}}} \|e\|_2 ,
\quad
\| c_{[n]} - \check{c} \|_2 
\lesssim  \frac{\sigma_s(c_{[n]})_1}{\sqrt{s}} 
+  \frac{1}{\sqrt{a_{\phi}}} \|e\|_2 ,
}
where $e$ is as in \ef{bAe-def}. We now apply Lemma \ref{lem:threshold} with $p=1,2$ to deduce that
\be{
\label{l1-bd-temp}
\nm{c_{[n]}-\hat{c}}_1 \lesssim  \sigma_s(c_{[n]})_1 
+  \frac{\sqrt{s}}{\sqrt{a_{\phi}}} \|e\|_2 
}
(here we used the fact that $\sigma_{2s}(c_{[n]})_1 \leq \sigma_s(c_{[n]})_1$)
and
\be{
\label{l2-bd-tmp}
\| c_{[n]} - \hat{c} \|_2 
\lesssim  \frac{\sigma_s(c_{[n]})_1}{\sqrt{s}} 
+  \frac{1}{\sqrt{a_{\phi}}} \|e\|_2  + \sigma_{2s}(c_{[n]})_2.
}
We wish to bound the term $\sigma_{2s}(c_{[n]})_2$. Let $z \in \ell^1(\bbN)$ and $z^*$ be its best $s$-term approximation. Then, by Stechkin's inequality  (see, e.g., \cite[Lem.~3.5]{adcock2022sparse}), 
\bes{
\sigma_{2s}(z)_2 = \sigma_{s}(z-z^*)_2 \lesssim \frac{\nm{z-z^*}_1}{\sqrt{s}} = \frac{\sigma_{s}(z)_1}{\sqrt{s}}. 
}
Using this, we get
\be{
\label{l2-bd-tmp2}
\| c_{[n]} - \hat{c} \|_2 
\lesssim \frac{\sigma_s(c_{[n]})_1}{\sqrt{s}} 
+  \frac{1}{\sqrt{a_{\phi}}} \|e\|_2 .
}
We now derive bounds for $f - \hat{f}$ in the $L^2_{\rho}$ and $L^{\infty}_{\rho}$-norms.
Using \ef{riesz-basis},  \ef{l2-bd-tmp2} and the fact that $\nm{e}_2 \leq \nm{f - f_n}_{L^{\infty}_{\rho}}$, we have
\bes{
\nm{f - \hat{f}}_{L^2_{\rho}} \leq \sqrt{b_{\phi}} \nm{c_{[n]} - \hat{c}}_2 + \nm{f - f_n}_{L^{\infty}_{\rho}} \lesssim \sqrt{b_{\phi}} \frac{\sigma_{s}(c_{[n]})_1}{\sqrt{s}} + \sqrt{\frac{b_{\phi}}{a_\phi}} \nm{f - f_n}_{L^{\infty}_{\rho}}.
}
Using \ef{Kphi-def} and \ef{l1-bd-temp} we have 
\bes{
\nm{f - \hat{f}}_{L^{\infty}_{\rho}} \leq K_{\phi} \nm{c_{[n]} - \hat{c}}_1 + \nm{f - f_n}_{L^{\infty}_{\rho}} \lesssim K_{\phi} \sigma_s(c_{[n]})_1 + \frac{K_{\phi}}{\sqrt{a_{\phi}} } \sqrt{s} \nm{f - f_n}_{L^{\infty}_{\rho}}.
}
Note that here we also used the fact that $K_{\phi} \geq \sqrt{a_{\phi}}$, which follows from \ef{Kphi-def}, \ef{riesz-basis} and the fact that $\rho$ is a probability measure. 
We now use the following interpolation inequality, which is a standard result (see, e.g., \cite[Lem.~A.1.7]{temlyakov2018multivariate}) 
\be{
\label{Lp-prod-split}
\nm{g}_{L^p_{\rho}} \leq \nm{g}^{1-2/p}_{L^{\infty}_{\rho}} \nm{g}^{2/p}_{L^2_{\rho}},\quad \forall g \in L^{\infty}_{\rho}(D)
}
to obtain the desired error bound \ef{basic-CS-err-bd} for arbitrary $p \in [1,\infty]$.

Therefore, it remains to show that $\bbP(E) \geq 1-\varepsilon$. However, this follows immediately from \ef{samp-rate}, Theorem \ref{t:bounded-riesz-rNSP} and the fact that $K^2_{\phi} / a_{\phi} \geq 1$.
}

\subsection{Overview of the proof of Theorem \ref{t:main-res}}\label{ss:our-technique}

The rest of this section proves Theorem \ref{t:main-res}. Before doing so, we first give a brief overview of the argument.

The first step is to pass from the infinite tail $f - f_n$ to a sum of finite pieces. This precise construction is obtained in Lemmas \ref{l:partition}-\ref{lem:Tail-bound}. Specifically, we first divide the indices $\{n+1,n+2,\ldots \}$ into pieces $\{ n_{k-1}+1,\ldots,n_k \}$, $k = 1,2,\ldots$, where $n_0 = n$. Then, given sparsity parameters $0 = s_0 < s_1 < s_2 < \cdots$, we partition $\{n+1,n+2,\ldots\}$ into index sets $T_1, T_2, \ldots$ depending on the coefficient vector $c$, where $T_k \subseteq \{ n_{k-1}+1,\ldots,n_k\}$ satisfies $|T_k| \leq s_k$. These index sets have two key properties. First, the norm of the corresponding coefficients it controllable: namely,
\bes{
\nm{c_{T_k}}_2 \leq \sigma_{s_{k-1}}(c)_2 + \tau_{n_{k-1}}(c)_2.
}
Second, each $T_k$ have controlled cardinality specified by $s_k$, and it lies within a finite range specified by $n_k$. This partition is visualized in Figure \ref{fig:partition}.

\begin{figure}[t]
\begin{center}
\begin{tikzpicture}[scale=1.0, >=stealth]
  \def\bh{0.35}
  \def\by{0.08}

  \draw[->] (-0.3,0) -- (13.0,0);

  \foreach \x/\lbl in {0/n_0, 2.75/n_1, 6.88/n_2, 12.25/n_3} {
    \draw (\x,0.08) -- (\x,-0.1) node[below] {\small$\lbl$};
    \draw[dashed, gray!50] (\x,\by) -- (\x,\by+\bh+0.5);
  }

  \draw[fill=green!20, draw=green!60!black, thick] (0,\by) rectangle (0.71,\by+\bh);
  \node[green!60!black, font=\small] at (0.36,\by+\bh/2) {$T_3$};

  \draw[fill=blue!20, draw=blue!70, thick] (0.71,\by) rectangle (1.91,\by+\bh);
  \node[blue!80!black, font=\small] at (1.31,\by+\bh/2) {$T_1$};

  \draw[fill=red!20, draw=red!70, thick] (1.91,\by) rectangle (2.75,\by+\bh);
  \node[red!80!black, font=\small] at (2.33,\by+\bh/2) {$T_2$};

  \draw[fill=green!20, draw=green!60!black, thick] (2.75,\by) rectangle (4.55,\by+\bh);
  \node[green!60!black, font=\small] at (3.65,\by+\bh/2) {$T_3$};

  \draw[fill=red!20, draw=red!70, thick] (4.55,\by) rectangle (6.88,\by+\bh);
  \node[red!80!black, font=\small] at (5.71,\by+\bh/2) {$T_2$};

  \draw[fill=green!20, draw=green!60!black, thick] (6.88,\by) rectangle (9.63,\by+\bh);
  \node[green!60!black, font=\small] at (8.26,\by+\bh/2) {$T_3$};

  \draw[draw=gray!50, dashed, thick] (10.25,\by) rectangle (11.88,\by+\bh);
  \node[gray!60, font=\small] at (11.07,\by+\bh/2) {$\cdots$};

\end{tikzpicture}
\end{center}
\caption{The partition constructed in Lemma \ref{l:partition}.}
\label{fig:partition}
\end{figure}

The construction of this partition allows us to bound the term $e$ in \ef{bAe-def} as $\nm{e}_2 \leq \sum_{k=1}^\infty \nm{A_k c_{T_k}}_2$, where $A_k$ is an $m \times (n_k - n_{k-1})$ matrix. Since our results are uniform guarantees (i.e., they hold simultaneously for all functions), we need to bound $\nm{A_k c_{T_k}}_2$ in terms of $\nm{c_{T_k}}_2$ for \textit{all} possible index sets $T_k$ of the given size. We do this via the deviation inequality, Theorem \ref{t:bounded-riesz-deviation}, yielding
\bes{
\nm{e}_2 \leq \sum^{\infty}_{k=1} \theta_k \left ( \sigma_{s_{k-1}}(c)_2 + \tau_{n_{k-1}}(c)_2 \right )
}
for constants $\theta_k$ depending on $n_k$, $s_k$, $m$ and $\varepsilon$. The final step is to choose the parameters $n_k$ and $s_k$ suitably. We make the dyadic choices $s_k = 2^k s$ and $n_k = 2^k n$, which fixes the ratio $s_k/n_k$ across all levels, and then perform a series of algebraic manipulations to obtain the desired error bound.

\subsection{Proof of Theorem \ref{t:main-res}}

We first present the two lemmas that form the crux of our analysis.

\lem{
\label{l:partition}
Let $z \in \ell^1(\bbN)$ and consider integers $1 = n_0 < n_1 < n_2 < \cdots$ and $0 = s_0 < s_1 < s_2 < \cdots$. Let $t_k = s_{k} - s_{k-1}$, $k \in \bbN$, and define $T_1$ as the index set of the largest $t_1$ entries of $z$ in absolute value in $\{n_0+1,\ldots,n_1\}$, with the assumption that $T_1 = \{n_0+1,\ldots,n_1\}$ if $t_1 \geq n_1 - n_0+1$. Then define $T_2$ as the index set of the largest $t_2$ entries of $z$ in absolute value in $\{n_0+1,\ldots,n_2\} \backslash T_1$, $T_3$ as the index set of the largest $t_3$ entries of   $z$ in absolute value in $\{n_0+1,\ldots,n_3\} \backslash (T_1 \cup T_2)$ and so forth. Then the collection $T_1,T_2,\ldots$ forms a partition of
\bes{
\mathrm{supp}(z) = \{ i : z_i \neq 0 \} \subseteq \bbN.
}
}
\prf{
Clearly the sets are disjoint. Hence we only need to show that their union is $\mathrm{supp}(z)$. Suppose first that $r : = |\mathrm{supp}(z)|  < \infty$. Then $\mathrm{supp}(z) \subseteq \{n_0+1,\ldots,n_k \}$ for some $k$. Since $|T_i| = t_i \geq 1$, it follows that $T_1 \cup \cdots \cup T_{k+r} \supseteq \mathrm{supp}(z)$, as required.

Now suppose that $|\mathrm{supp}(z)| = + \infty$. Consider an index $i \in \mathrm{supp}(z)$. Define
\bes{
S_i = \{ j : |z_j| \geq |z_i| \},\quad m_i = |S_i|,\quad a_i = \max S_i.
}
Notice that $m_i , a_i < \infty$ since $z \in \ell^1(\bbN)$. Now choose $k$ sufficiently large so that $n_k \geq a_i$ and $s_k > m_i$. This is possible, since $\{n_i\}$ and $\{s_i\}$ are strictly increasing sequences of positive integers. We now show that $i \in T_1 \cup \cdots \cup T_k$, by arguing by contradiction. Suppose that $i \notin T_1 \cup \cdots \cup T_k$. Then every $j \in T_1 \cup \cdots \cup T_k$ is such that $|z_j| \geq |z_i|$. Hence $T_1 \cup \cdots \cup T_k \subseteq S_i$.
However, by construction,
$
|T_1 \cup \cdots \cup T_k| = t_1 + \cdots + t_k = s_k - s_0 = s_k,
$
while $|S_i| = m_i < s_k$. This is a contradiction. Therefore $i \in T_1 \cup \cdots \cup T_k$, as required.
}

\lem{
[Tail bound]\label{lem:Tail-bound}
Suppose that $f = \sum_{i \in \bbN} c_i \phi_i \in L^2_{\rho}(D)$ be defined everywhere and let $e$ be as in \ef{bAe-def}. Consider integers $n = n_0 < n_1 < n_2 < \cdots$ and $0 = s_0 < s_1 < s_2 < \cdots$. Then
\bes{
\nm{e}_2 \leq \sum^{\infty}_{k=1} \theta_k \left ( \sigma_{s_{k-1}}(c)_2 + \nm{c - c_{[n_{k-1}]}}_2 \right ),
}
where
\bes{
\theta_k = \sup \left \{ \nm{A_k z}_2 : z \in \bbC^{n_k}, \nm{z}_0 \leq s_k, \nm{z}_2 =1 \right \},\quad 
A_k = \frac{1}{\sqrt{m}} \left ( \phi_j(x_i) \right )_{i \in [m],j \in [n_0+1, n_k] }.
}
}
\prf{
Let $z = (c_i)^{\infty}_{i=n+1}$. We first use Lemma \ref{l:partition} to construct a partition $T_1,T_2,\ldots$ of $\mathrm{supp}(z) = \mathrm{supp}(c) \backslash [n]$. 
Using this, we can write $f - f_n = \sum^{\infty}_{k=1} \sum_{j \in T_k} c_j \phi_j$.
Since $|T_k| = t_k \leq s_k$, this implies that
\bes{
\nm{e}_2 \leq \sum^{\infty}_{k=1} \theta_k \nm{c_{T_k}}_2.
}
Now consider the terms $\nm{c_{T_k}}_2$. For $k = 1$, since $T_1 \subseteq \{n_0+1,\ldots,n_1\}$, where $n_0 = n$, 
we have
\bes{
\nm{c_{T_1}}_2 \leq \nm{c - c_{[n]}}_2 = \nm{f - f_n}_{L^2_{\rho}}.
}
Next, consider $k = 2$. We have
\eas{
\nm{c_{T_2}}^2_2 &\leq \sum_{i \in [n_0+1,n_2] \backslash T_1 } |c_i|^2 = \sum_{i \in [n_0+1,n_1] \backslash T_1} |c_i|^2 + \sum_{i \in [n_1+1,n_2]} |c_i |^2
\leq \sigma_{t_1}(c_{[n_0+1,n_1]})^2_2 + \nm{c - c_{[n_1]} }^2_2.
}
We now claim that $\sigma_s(c_{\Lambda})_p \leq \sigma_s(c)_p$ for any $\Lambda \subseteq \bbN$. Indeed, let $S$ be the index set of the largest $s$ entries of $c$ in absolute value and $S'$ be the index set of the largest $s$ entries of $c$ in $\Lambda$ in absolute value. Then
\bes{
\sigma_s(c_{\Lambda})_1 = \nm{c} - \nm{c_{\Lambda^c}}_1 - \nm{c_{S'}}_1 = \sigma_s(c)_1 + \nm{c_S}_1 - \nm{c_{\Lambda^c}}_1 - \nm{c_{S'}}_1 = \sigma_s(c)_1 + \nm{c_S}_1 - \nm{c_{S' \cup \Lambda^c}}_1.
}
Now $S$ must be a subset of $S' \cup \Lambda^c$. Therefore $\nm{c_{S' \cup \Lambda^c}}_1 \geq \nm{c_S}_1$. The claim now follows.

Applying this claim and recalling that $t_1 = s_1 - s_0 = s_1$, we see that
\bes{
\nm{c_{T_2}}^2_2 \leq \sigma_{s_1}(c)^2_2 + \nm{c-c_{[n_1]}}^2_2.
}
Now consider general $k \geq 2$. We write 
\eas{
\nm{c_{T_k}}^2_2  \leq \sum_{i \in [n_0,\ldots,n_k] \backslash (T_1 \cup \cdots \cup T_{k-1}) } |c_i|^2
& \leq \sum_{i \in [n_0,\ldots,n_{k-1}] \backslash (T_1 \cup \cdots \cup T_{k-1})} |c_i|^2  + \nm{c - c_{[n_{k-1}]} }^2_2
\\
& = \sigma_{t_1+\cdots+t_{k-1}}(c_{[n_0+1,n_{k-1}]})^2_2  + \nm{c - c_{[n_{k-1}]} }^2_2
\\
& \leq \sigma_{s_{k-1}}(c)^2_2 +\nm{c - c_{[n_{k-1}]} }^2_2.
}
Taking the square root, we deduce that $\nm{c_{T_k}}_2 \leq \sigma_{s_{k-1}}(c)_2 + \nm{c-c_{[n_{k-1}]}}_2$ for all $k \geq 2$. This gives the result.
}

\prf{[Proof of Theorem \ref{t:main-res}] 

As in the proof of Theorem \ref{t:basic-CS-thm}, let $E$ be the event that $A$ has the rNSP of order $s$ with constants $\rho = 1/2$ and $\tau = 2 / \sqrt{a_{\phi}}$. Observe that $\bbP(E) \geq 1-\varepsilon/2$ due to \ef{samp-cond-2} and Theorem \ref{t:bounded-riesz-rNSP}.

Suppose that $E$ occurs. Then \ef{step-bounds} holds, where $e$ is as in \ef{bAe-def}. We now consider $\nm{e}_2$. For $k \in \bbN$, let $n_k = 2^{k} n$ and $s_k = 2^k s$. Let $r_1,r_2,\ldots > 0$ be scalars whose values will also be chosen later, and, for $k \in \bbN$, write $F_k$ for the event $\theta_k \leq r_k$, where $\theta_k$ is as in Lemma~\ref{lem:Tail-bound}.
Let $F = \bigcap_{k \in \bbN} F_k$. Then this and the previous lemma give that 
\be{
 \label{F-implies-ebound}
 \text{$F$ occurs} \quad \Rightarrow \quad \nm{e}_2 \leq r_1 \nm{c-c_{[n]}}_2 + \sum^{\infty}_{k=1} r_{k+1} \left(  \sigma_{2^{k} s}(c)_2 + \nm{c - c_{2^{k} n} }_2 \right ).
}
Consider the event $F_k$ and let $\varepsilon_k = \varepsilon/2^{k+1}$. Note that Theorem \ref{t:bounded-riesz-deviation} considers the supremum over $\{ x : \nm{x}_0 \leq s_k,\ \nm{B_k x}_2 = 1\}$, where $B_k = \bbE(A_k^* A_k)^{1/2}$, while $\theta_k$ uses $\nm{x}_2 = 1$. Since
\begin{align*}
\bbE(A_k^*A_k)_{jl} = \frac{1}{m}\sum_i \bbE[\overline{\phi_j(x_i)}\phi_l(x_i)] = \langle \phi_j,\phi_l\rangle_{L^2_\rho},
\end{align*}
the upper Riesz bound gives $\nm{B_k x}_2^2 = x^*\bbE(A_k^*A_k)x = \nms{\sum_j x_j \phi_j}^2_{L^2_\rho} \leq b_\phi\nm{x}_2^2$,
and therefore $\nm{B_k x}_2 \leq \sqrt{b_\phi}\nm{x}_2$. Writing $x = \nm{B_k x}_2 \cdot x/\nm{B_k x}_2$ and applying this gives
\begin{align*}
\theta_k \leq \sqrt{b_{\phi}} \sup_{\nm{B_k x}_2 = 1, \nm{x}_0 \leq s_k} \nm{A_k x}_2.
\end{align*} 
By applying Theorem \ref{t:bounded-riesz-deviation} to the matrix $A_k$, with $s = s_k$, $N = n_k$ we see that
\be{
\label{thetak-cond}
\bbP \left ( \theta_k \geq \sqrt{2b_\phi g_k} \right ) \leq \bbP \left ( \sup_{\nm{B_k x}_2 = 1, \nm{x}_0 \leq s_k} \nm{A_k x}_2 \geq \sqrt{g_k+1} \right )\leq 2 \exp \left ( -  C_1g_k \frac{ a_{\phi} m}{K^2_{\phi} s_k} \right ),
}
for some universal constant $C_1>0$, provided, for some universal constants $C_2,C_3>0$ with $C_3\leq 1$, $g_k$ satisfies
\be{
\label{tk-cond}
\frac{s_k K^2_{\phi} \log^2(2C_2 K^2_\phi s_k / a_{\phi} ) \log(\E n_k) }{a_{\phi} m } + 1 \leq C_3 g_k.
}
Applying the condition \ef{samp-cond-2} and the fact that $s_k = 2^k s$ and $n_k = 2^k n$ we see that
\begin{align}\label{eq:g-value-condition}
\frac{s_k K^2_{\phi} \log^2(2C_2 K^2_\phi s_k / a_{\phi} ) \log(\E n_k) }{a_{\phi} m } & \leq \frac{2^k \log^2(2^{k+1} C_2K^2_{\phi} s / a_{\phi}  ) \log(2^{k}\E n) }{\log^2(2 K^2_{\phi} s/a_{\phi} ) \log(2n) + \log(2/\varepsilon) } \leq c_1 k^3 2^k
\end{align}
for some universal constant $c_1 > 0$.
Hence we now pick $g_k = g_* k^3 2^k$, where $g_* \geq 2 C_3/c_1$ so that \ef{tk-cond} holds. In this case, \ef{thetak-cond} and the condition \ef{samp-cond-2} imply that
\bes{
\bbP \left ( \theta_k \geq \sqrt{2 b_\phi g_*} k^{3/2} 2^{k/2} \right ) \leq 2 \exp \left ( - \frac{g_*}{c_2} k^3 \log(2/\varepsilon) \right ),
}
where $c_2>0$ is a universal constant. Notice that
\bes{
\frac{g_*}{c_2} k^3 \log(2/\varepsilon) \geq (k+2) \log(2) + \log(1/\varepsilon)
}
after potentially increasing $g_*$. Hence
\bes{
\bbP \left (\theta_k \geq r_k \right ) \leq \varepsilon / 2^{k+1} = \varepsilon_k,\quad \text{where }r_k = \sqrt{2 b_\phi g_*} k^{3/2} 2^{k/2},
}
which implies that $\bbP(F^c_k) \leq \varepsilon_k$ with this choice of $r_k$. 

Since $\bbP(F^c) \leq \sum_{k \in \bbN} \bbP(F^c_k) \leq \varepsilon/2$, we deduce that with probability at least $1-\varepsilon/2$, $e$ satisfies
\bes{
\nm{e}_2 \lesssim \sqrt{b_\phi}\left(\tau_n(c)_2 + \sum^{\infty}_{k=1} k^{3/2} 2^{k/2} \left ( \sigma_{2^k s}(c)_2 + \tau_{2^k n}(c)_2 \right )\right).
}
Since $\sigma_s(c)_2$ is nonincreasing in $s$, we have 
\bes{
s(2^k-1) (\sigma_{2^k s}(c)_2)^u \leq \sigma_{s+1}(c)^u_2 + \cdots + \sigma_{2^k s}(c)^u_2 \leq \sum_{j > s} \sigma_j(c)^u_2
}
and therefore
\begin{align}\label{eq:u-small-enough}
\sum^{\infty}_{k=1} k^{3/2} 2^{k/2} \sigma_{2^k s}(c)_2 \leq \left ( \frac1s \sum_{j > s} \sigma_j(c)^u_2 \right )^{\frac1u} \sum^{\infty}_{k=1} \frac{k^{3/2} 2^{k/2}}{(2^k-1)^{\frac1u} } \lesssim_u \left ( \frac1s \sum_{j > s} \sigma_j(c)^u_2 \right )^{\frac1u},
\end{align}
where we recall that $u < 2$. By the same argument, we also have
\bes{
\tau_n(c)_2 + \sum^{\infty}_{k=1} k^3 2^k \tau_{2^k n}(c)_2 \lesssim_v \left ( \frac1n \sum_{j > n/2} \tau_n(c)^v_2 \right )^{\frac1v},
}
as $v < 2$.  Thus, with probability at least $1-\varepsilon/2$, we have
\begin{align}\label{eq:upper-bound-of-e-2-norm}
\nm{e}_{2} \lesssim_{u,v} \sqrt{b_\phi}\left(\left ( \frac1s \sum_{j > s} \sigma_j(c)^u_2 \right )^{\frac1u} + \left ( \frac1n \sum_{j > n/2} \tau_n(c)^v_2 \right )^{\frac1v}\right).
\end{align}
We now substitute this into \ef{l1-bd-temp} and \ef{l2-bd-tmp2} and apply the union bound to deduce that, with probability at least $1-\varepsilon$,
\eas{
\| c_{[n]} - \hat{c} \|_1 
& \lesssim_{u,v}  \sigma_s(c_{[n]})_1 
+  \frac{\sqrt{sb_\phi}}{\sqrt{a_{\phi}}} \left [ \left ( \frac1s \sum_{j > s} \sigma_j(c)^u_2 \right )^{\frac1u} + \left ( \frac1n \sum_{j > n/2} \tau_j(c)^v_2 \right )^{\frac1v} \right ],
\\
\| c_{[n]} - \hat{c} \|_2 
& \lesssim_{u,v}  \frac{\sigma_s(c_{[n]})_1 }{\sqrt{s}}
+  \frac{\sqrt{b_{\phi}}}{\sqrt{a_{\phi}}} \left [ \left ( \frac1s \sum_{j > s} \sigma_j(c)^u_2 \right )^{\frac1u} + \left ( \frac1n \sum_{j > n/2} \tau_j(c)^v_2 \right )^{\frac1v} \right ].
}
Notice that this holds for all $f$ with at least probability $1-\varepsilon$, since the events $E,F$ are independent of $f$. Consider the $L^2_{\rho}$-norm error. Using \ef{riesz-basis} and the bound
\bes{
 \tau_n(c)_2 \leq \left ( \frac{\tau_{n/2+1}(c)^v_2 + \cdots + \tau_{n}(c)^v_2}{n/2} \right )^{\frac1v} \lesssim_v \left ( \frac1n \sum_{j > n/2} \tau_j(c)^v_2 \right )^{\frac1v},
}
we get
\eas{
\nm{f - \hat{f}}_{L^2_{\rho}} & \leq \sqrt{b_{\phi}} \left ( \nm{c_{[n]} - \hat{c}}_2 + \tau_n(c)_2 \right )
\\
& \lesssim_{u,v} \sqrt{b_{\phi}} \left \{  \frac{\sigma_s(c_{[n]})_1 }{\sqrt{s}}
+  \frac{\sqrt{b_{\phi}}}{\sqrt{a_{\phi}}} \left [ \left ( \frac1s \sum_{j > s} \sigma_j(c)^u_2 \right )^{\frac1u} + \left ( \frac1n \sum_{j > n/2} \tau_j(c)^v_2 \right )^{\frac1v} \right ] \right \}.
}
While for the $L^{\infty}_{\rho}$-norm error we use the bound
$
\nm{f - \hat{f}}_{L^{\infty}_{\rho}} \leq  K_{\phi} \left ( \nm{c_{[n]} - \hat{c}}_1 + \tau_n(c)_1 \right )
$
to obtain
\eas{
\nm{f - \hat{f}}_{L^{\infty}_{\rho}} & \lesssim_{u,v} K_{\phi} \left \{ \sigma_s(c_{[n]})_1 
+ \tau_n(c)_1 +  \frac{\sqrt{s}\sqrt{b_{\phi}}}{\sqrt{a_{\phi}}} \left [ \left ( \frac1s \sum_{j > s} \sigma_j(c)^u_2 \right )^{\frac1u} + \left ( \frac1n \sum_{j > n/2} \tau_j(c)^v_2 \right )^{\frac1v} \right ] \right \}
}
with the same probability. We now use \ef{Lp-prod-split}.
}

\section{Application to weighted Wiener-type spaces}\label{s:weighted-wiener}

In this section, we consider the mixed weighted Wiener spaces $S^r_{\theta} \cA(\bbT^d)$ discussed in \S \ref{ss:weighted-wiener-intro}. However, to illustrate the generality of our approach, we work with abstract Wiener-type spaces, from which the main result, Theorem \ref{t:wiener-main}, for $S^r_{\theta} \cA(\bbT^d)$ follows as a special case.

\subsection{Abstract Wiener-type spaces}

Throughout this section, we consider the abstract setting where $(D,\cD,\rho)$ is a probability space and $\{ \phi_i \}_{i \in \bbN}$ is a bounded orthonormal basis of functions defined everywhere. In particular, this need not be the Fourier basis on $\bbT^d$. Note that we could easily consider a Riesz basis, in which case the various error bounds would involve the constants $a_{\phi},b_{\phi}$. For simplicity, we will not do this.
Let $\theta \in (0,\infty]$ and let $w = (w_i)_{i \in \bbN}$ be a sequence of positive weights satisfying
$
w \in \ell^{1/(1-1/\theta)_{+}}(\bbN).
$
Consider the set of functions
\be{
\label{F-weighted-def}
F = F_{w,\theta} = \left \{ f = \sum_{i \in \bbN} c_i \phi_i : \sum_{i \in \bbN} \left ( \frac{|c_i|}{w_i} \right )^{\theta} \leq 1 \right \}.
}
Notice that the series $\sum_{i \in \bbN} c_i \phi_i(x)$ converges absolutely for all $x$, due to the boundedness of the $\phi_i$'s. For $\theta > 1$ this follows from H\"older's inequality and the fact that $w \in \ell^{1/(1-1/\theta)}(\bbN)$. For $\theta \in (0,1]$, it follows from first noting that $\|w\|_{\ell^\infty} < \infty$ and second observing that the condition $\sum_{i \in \bbN} \left(\frac{|c_i|}{w_i}\right)^{\theta} \leq 1$ implies that $\sum_{i \in \bbN} \frac{|c_i|}{w_i} \leq 1$. Hence any $f \in F$ is defined everywhere.

\lem{
\label{F-weighted-sn-term}
Let $f = \sum_{i \in \bbN} c_i \phi_i \in F$. Then the following holds for any $\theta \in [1,\infty]$.
\begin{enumerate}[label = (\roman*)]
\item If $1 \leq p < \theta$ then $\sigma_s(c)_p \leq \sigma_{s}(w)_{\frac{1}{1/p-1/\theta}}.
$
\item If $\theta \leq p \leq \infty$ then
$
\sigma_s(c)_p \leq \inf_{\substack{s_1,s_2 \in \bbN \\ s_1 + s_2 = s}} \left \{ \sigma_{s_1}(w)_{\infty} \cdot s^{1/p-1/\theta}_2 \right \}.
$
\end{enumerate}
Moreover, we also have
\bes{
\sup_{f = \sum_{i} c_i \phi_i \in F} \tau_n(c)_p = \tau_n(w)_{\tilde{p}},\quad \text{where } \tilde{p} = \begin{cases} \frac{1}{1/p-1/\theta} & 1 \leq p < \theta \\ + \infty & \theta \leq p \leq \infty \end{cases} .
}
}

\prf{
Let $\pi : \bbN \rightarrow \bbN$ be a bijection that gives a nonincreasing rearrangement of $w$ and set $d_i = c_i / w_i$. Consider case (i). Then, by H\"older's inequality,
\bes{
\sigma_s(c)_p \leq \left ( \sum_{i > s} | d_{\pi(i)}|^p w^p_{\pi(i)} \right )^{\frac1p} \leq \left ( \sum_{\substack{i > s_1 }} | d_{\pi(i)} |^{\theta} \right )^{\frac{1}{\theta}} \left ( \sum_{\substack{i > s_1 }} |w_{\pi(i)} |^{\frac{p \theta}{\theta-p}} \right )^{\frac{\theta-p}{p\theta}}.
}
For the first term, the definition of $F$ gives $\sum_{\substack{i > s }} | d_{\pi(i)} |^{\theta}  \leq 1$.
For the second term, the fact that $\pi$ is a nonincreasing rearrangement gives 
\bes{
\left ( \sum_{\substack{i > s }} |w_{\pi(i)} |^{\frac{p \theta}{\theta-p}} \right )^{\frac{\theta-p}{p\theta}} \leq \sigma_{s}(w)_{\frac{p \theta}{\theta-p}}.
}
This completes the proof of case (i).

Now consider case (ii). Fix $s_1,s_2 \in \bbN$ with $s_1+s_2 = s$. Let $T \subseteq \{ \pi(i) : i > s_1 \}$, $|T| = s_2$ be the index set of the largest $s_2$ coefficients of $(d_{\pi(i)})_{i > s_1}$ in absolute value. By Stechkin's inequality, we have
\bes{
\sigma_s(c)_p \leq \left ( \sum_{\substack{i > s_1 \\ \pi(i) \notin T }} | d_{\pi(i)} |^p w^{p}_{\pi(i)} \right )^{\frac1p} \leq \sigma_{s_1}(w)_{\infty} \sigma_{s_2}(d)_p \leq \sigma_{s_1}(w)_{\infty} s^{1/p-1/\theta}_2 \nm{d}_{\theta} \leq \sigma_{s_1}(w)_{\infty} s^{1/p-1/\theta}_2.
}
This completes the proof of case (ii).

Consider the second result. For $1 \leq p < \theta$, arguing in a similar way, we have
\bes{
\tau_n(c)_p = \sum_{i > n} |d_i|^p w^p_i \leq \left ( \sum_{i > d} |d_i |^{\theta} \right )^{\frac{1}{\theta}} \left ( \sum_{i > n} w^{\frac{p\theta}{\theta-p}}_i \right )^{\frac{\theta-p}{p\theta}} \leq \tau_n(w)_{\frac{1}{1/p-1/\theta}}.
}
Conversely, for $p \geq \theta$, we have 
\bes{
\tau_n(c)_p \leq \tau_n(c)_{\theta} = \left ( \sum_{i > n} |d_i |^{\theta} w^{\theta}_i \right )^{\frac{1}{\theta}} \leq \tau_n(w)_{\infty},
}
as required. Having shown the desired upper bound, we only need to find an $f \in F$ for which $\tau_n(c)_p \geq \tau_n(w)_{\tilde{p}}$. Suppose that $1 \leq p < \theta$ and define $f = \sum_{i \in \bbN} c_i \phi_i$, where
\bes{
c_1 = \cdots = c_n = 0,\qquad c_i = \frac{w^{\tilde{p}/p}_i}{\tau_n(w)^{\tilde{p}/\theta}_{\tilde{p}}},\quad i > n.
}
Notice that $f \in F$ by construction, and also that
\bes{
\tau_n(c)^p_p = \sum_{i > n}  \frac{w^{\tilde{p}}_i}{\tau_n(w)^{p \tilde{p}/\theta}_{\tilde{p}}} = \tau_n(w)^{\tilde{p} - p \tilde{p}/\theta}_{\tilde{p}} = \tau_n(w)^p_{\tilde{p}},
}
Hence $\tau_n(c)_p = \tau_n(w)_{\tilde{p}}$, as required. Now consider $p \geq \theta$ and let $f = w_{n^*} \phi_{n^*}$, where $n^* > n$ is such that $\tau_{n}(w)_{\infty} = w_{n^*}$. Then $f \in F$ and we trivially have $\tau_n(c)_p = w_{n^*} = \tau_n(w)_{\infty}$. This gives the result.
}

We now state the following lemma, which is a short exercise.

\lem{
\label{lem:sigma-s-c-wiener-alg-decay}
Let $p \in [1,\infty]$ and $w_i \asymp i^{-r} \log^{t}(i+1)$ for some $r > (1-1/\theta)_+$ and $t \in \bbR$. Then $w \in \ell^{1/(1-1/\theta)_{+}}(\bbN)$ and, for any $p \in [1,\infty]$, we have
\bes{
\sigma_s(w)_p \leq \tau_s(w)_p \lesssim_{r,t,p} s^{\frac1p - r} \log^{t}(s+1).
}
}

Next, we establish the main result for the class \ef{F-weighted-def}, which is based on weights of the form of the previous lemma.

\thm{
\label{t:abstract-wiener-main}
Let $\theta \in (0,\infty]$, $r > (1-1/\theta)_{+}$, $t \in \bbR$ and consider the class
\be{
\label{F-r-t-theta}
F = \left \{ f = \sum_{i \in \bbN} c_i \phi_i : \sum_{i \in \bbN} \left ( \frac{i^{r}}{\log^{t}(i+1)} |c_i| \right )^{\theta} \leq 1 \right \}.
}
Let 
\bes{
n = \begin{cases} \lceil s^{(r+1/\theta-1/2)/(r - (1-1/\theta)_+ ) } \rceil & r \leq 1/2 
\\
\max\{ s , \lceil s^{(r+1/\theta-1/2)/r} \rceil \} & r > 1/2
\end{cases},
}
$0 < \varepsilon < 1$, $s \in \bbN$ and $x_1,\ldots,x_m \sim_{\mathrm{i.i.d.}} \rho$, where $m$ satisfies
\bes{
m \geq c \cdot K^2_{\phi} \cdot s \cdot \left ( \log^2(2 K^2_{\phi} s) \log(2n) + \log(2/\varepsilon) \right ).
}
Then the following holds with probability at least $1-\varepsilon$. For any $f \in F$ the approximation $\hat{f} = \cR_{s,n}(f)$ is at most $2s$-sparse and satisfies
\bes{
\nm{f - \hat{f}}_{L^p_\rho} \lesssim_{r,t} K^{1-2/p}_\phi s^{1-1/p-1/\theta-r} \log^{t}(s+1) .
}
}

\prf{

Since $\{ \phi_i \}_{i \in \bbN}$ is a bounded orthonormal basis, $a_\phi = b_\phi = 1$. Let $f = \sum_{i \in \bbN} c_i \phi_i$ and $c = (c_i)_{i \in \bbN}$. We treat the cases $0 < \theta < 1$, $1 \leq \theta < 2$ and $\theta \geq 2$ separately.

\pbk
\textit{Case 1: $0 < \theta < 1$.} Combining Lemmas~\ref{F-weighted-sn-term} and~\ref{lem:sigma-s-c-wiener-alg-decay} we get
\bes{
\sigma_s(c)_p \lesssim \sigma_{s/2}(w)_{\infty} s^{1/p-1/\theta} \lesssim_{r,t} s^{1/p-1/\theta-r} \log^{t}(s+1),\quad \forall p \in [1,\infty]
}
and
\bes{
\tau_n(c)_p \lesssim \tau_n(w)_{\infty} \lesssim_{r,t} n^{-r} \log^t(n+1).
}
Applying Theorem~\ref{t:basic-CS-thm} with $a_\phi = b_\phi = 1$, we have
\bes{
\nm{f - \hat{f}}_{L^p_\rho} \lesssim_{r,t} K^{1-2/p}_\phi \left( s^{1-1/p-1/\theta-r} \log^{t}(s+1) + s^{1/2-1/p} n^{-r} \log^{t}(n+1) \right).
}
We now use the fact that $n = \lceil s^{(r+1/\theta-1/2)/r} \rceil$ (since $(1-1/\theta)_{+} = 0$) in this case to obtain the desired bound.

\pbk
\textit{Case 2: $1 \leq \theta < 2$.} Combining Lemmas~\ref{F-weighted-sn-term} and~\ref{lem:sigma-s-c-wiener-alg-decay} we get
\bes{
\sigma_{s}(c)_1 \lesssim \sigma_{s}(w)_{\frac{1}{1-1/\theta}} \lesssim_{r,t} s^{1-1/\theta-r} \log^{t}(s+1)
}
and
\bes{
\sigma_s(c)_2 \lesssim \sigma_{s/2}(w)_{\infty} s^{1/2-1/\theta} \lesssim_{r,t} s^{1/2-1/\theta-r} \log^t(s+1),
}
as well as
\bes{
\tau_n(c)_1 \leq \tau_n(w)_{\frac{1}{1-1/\theta}} \lesssim_{r,t} n^{1-1/\theta-r} \log^t(n+1)
}
and
\bes{
\tau_n(c)_2 \leq \tau_n(w)_{\infty} \lesssim_{r,t} n^{-r} \log^t(n+1).
}
We now divide into two cases: (a) $1-1/\theta < r \leq 1/2$ and (b) $r > 1/2$.

\pbk
\textit{Case 2(a): $1-1/\theta < r \leq 1/2$.} In this case, we apply Theorem \ref{t:basic-CS-thm} once more to get
\bes{
\nm{f - \hat{f}}_{L^p_\rho} \lesssim_{r,t} K^{1-2/p}_\phi \left( s^{1-1/p-1/\theta-r} \log^{t}(s+1) + s^{1/2-1/p} n^{1-1/\theta-r} \log^{t}(n+1) \right).
}
The result follows after using the fact that $n = \lceil s^{(r+1/\theta-1/2)/(r - 1 + 1/\theta)} \rceil$ in this case.

\pbk
\textit{Case 2(b): $r > 1/2$.} In this case, we shall apply Theorem \ref{t:main-res}. Since $r > 1/2$, there exists a $v = v(r) \in (0,2)$ such that $r v > 1$. Using the above estimate for $\tau_n(c)_2$, we deduce that
\bes{
\left ( \frac1n \sum_{j > n/2} \tau_j(c)^v_2 \right )^{\frac{1}{v}} \lesssim_{r,t} n^{-r} \log^{t}(n+1).
}
Similarly, since $r > 1-1/\theta$, there exists a $u = u(r)$ such that $r-1/\theta -1/2) u > 1$. It follows that
\bes{
\left ( \frac1n \sum_{j > s} \sigma_j(c)^u_2 \right )^{\frac{1}{u}} \lesssim_{r,t} s^{1/2-1/\theta-r} \log^{t}(s+1).
}
Applying Theorem \ref{t:main-res} we now see that
\bes{
\nm{f - \hat{f}}_{L^p_\rho} \lesssim_{r,t} K^{1-2/p}_\phi \left( s^{1-1/p-1/\theta-r} \log^{t}(s+1) + s^{1/2-1/p} n^{-r} \log^{t}(n+1) \right).
}
The result follows after recalling that $n = \lceil s^{(r+1/\theta-1/2)/r} \rceil$ in this case.

\pbk
\textit{Case 3: $\theta \geq 2$.} Combining Lemmas~\ref{F-weighted-sn-term} and~\ref{lem:sigma-s-c-wiener-alg-decay} we get
\bes{
\sigma_{s}(c)_p \lesssim \sigma_{s}(w)_{\frac{1}{1/p-1/\theta}} \lesssim_{r,t} s^{1/p-1/\theta-r} \log^{t}(s+1),\quad p = 1,2,
}
and
\bes{
\tau_n(c)_p \leq \tau_n(w)_{\frac{1}{1/p-1/\theta}} \lesssim_{r,t} n^{1/p-1/\theta-r} \log^t(n+1),\quad p =1,2.
}
Recall that $r > 1-1/\theta$ and therefore $r + 1/\theta - 1/2 > 1/2$. Hence we can find $u = u(r) \in (0,2)$ such that $(r+1/\theta-1/2) u > 1$. We now apply Theorem \ref{t:main-res} (with $v = u$) to obtain
\bes{
\nm{f - \hat{f}}_{L^p_\rho} \lesssim_{r,t} K^{1-2/p}_\phi \left( s^{1-1/p-1/\theta-r} \log^{t}(s+1) + s^{1/2-1/p} n^{1/2-1/\theta-r} \log^{t}(n+1) \right).
}
The result follows after recalling that $n = s$ in this case.
}

\subsection{Application to weighted mixed Wiener spaces}

We now consider the spaces $S^{r}_{\theta} \cA$ introduced in Definition \ref{ss:weighted-wiener-intro}. In particular, we derive Theorem \ref{t:wiener-main} as a corollary of Theorem \ref{t:abstract-wiener-main}.
In order to connect $S^r_{\theta} \cA$ to the setting of Theorem \ref{t:abstract-wiener-main}, we show that the weights of $S^r_{\theta} \cA$, when ordered in nonincreasing order, are of the type considered therein. The following lemma was also used in part of the proof of \cite[Theorem~4.5]{moeller2026best}.

\begin{lemma}
\label{lem:Salpha-r-A-weights}
Let $v = (v_k)_{k \in \mathbb{Z}^d}$ with $v_k = \prod_{i=1}^d (1+|k_i|)^{-r}$, and $\pi : \bbN \rightarrow \bbZ^d$ be a bijection that gives nonincreasing rearrangement of $v$. Then
\begin{align*}
v_{\pi(i)} \lesssim_{d,r} i^{-r} \log^{(d-1)r}(i+1).
\end{align*}
\end{lemma}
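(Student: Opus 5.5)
The proof reduces the claim to a counting estimate for hyperbolic crosses. For $N \geq 1$, let
\bes{
\Gamma(N) := \#\left\{ k \in \bbZ^d : \prod_{i=1}^d (1+|k_i|) \leq N \right\}.
}
Since $v_k = \prod_i (1+|k_i|)^{-r}$, we have $v_k \geq N^{-r}$ if and only if $\prod_i(1+|k_i|) \leq N$. The nonincreasing rearrangement therefore satisfies the following: if $i > \Gamma(N)$, then $v_{\pi(i)} < N^{-r}$. The argument inverts this relation. The key step is the standard estimate
\bes{
\Gamma(N) \asymp_d N \log^{d-1}(N+1),
}
of which only the lower bound is needed here.

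\textbf{Lower bound on $\Gamma(N)$.} We bound $\Gamma(N)$ from below by counting only $k \in \bbN_0^d$. Writing $j_i = 1+k_i \geq 1$, this is the number of $d$-tuples of positive integers with product at most $N$, i.e.\ the summatory function of the $d$-fold divisor function, which is $\gtrsim_d N\log^{d-1}(N+1)$. A direct proof goes by induction on $d$:
\bes{
\Gamma_d(N) \geq \sum_{j=1}^{N} \Gamma_{d-1}(\lfloor N/j \rfloor) \gtrsim \sum_{j \leq N} \frac{N}{j}\log^{d-2}(N/j+1) \gtrsim N \log^{d-1}(N+1),
}
where the last step restricts to $j \leq \sqrt{N}$, on which $\log(N/j) \geq \tfrac12 \log N$. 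Small $N$ is absorbed into constants.

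\textbf{Inversion.} Given $i \geq 2$, choose $N$ as the largest integer with $\Gamma(N) < i$. Then $\Gamma(N+1) \geq i$, so every index at position $\geq i$ has weight at most $(N+1)^{-r}$; more precisely $v_{\pi(i)} \leq (N+1)^{-r}$, because at least $i$ indices have weight $\geq (N+1)^{-r}$ and position $i$ can be at most the smallest of these. Nonincreasingness then gives this directly.

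It remains to show $N+1 \gtrsim_d i/\log^{d-1}(i+1)$. Suppose not, i.e.\ $N+1 \leq c\, i/\log^{d-1}(i+1)$ for a small constant $c$. Consider instead $M := \lceil C\, i/\log^{d-1}(i+1) \rceil$. Then $\log(M+1) \gtrsim \log(i+1)$, since $M \geq i^{1/2}$ for large $i$. The lower bound gives $\Gamma(M) \gtrsim C\, i$, which exceeds $i$ for $C$ large, so $M > N$ and hence $N+1 \geq M/\text{const}$. This yields
\bes{
v_{\pi(i)} \leq (N+1)^{-r} \lesssim_{d,r} \left( \frac{\log^{d-1}(i+1)}{i}\right)^r = i^{-r}\log^{(d-1)r}(i+1).
}
The case $i = 1$ is trivial, since $v \leq 1$.

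\textbf{Main obstacle.} The only nontrivial step is the lower bound on the hyperbolic-cross cardinality, together with the bookkeeping of $\log(M+1) \asymp \log(i+1)$ in the inversion. Both are standard, so the proof should cite or reprove the divisor-sum estimate via the induction sketched above.
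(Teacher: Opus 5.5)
There is a genuine gap: you use the wrong direction of the hyperbolic-cross counting estimate. An \emph{upper} bound on $v_{\pi(i)}$ can only come from an \emph{upper} bound on $\Gamma$. If many indices had large weight, $v_{\pi(i)}$ would be large, and no lower bound on $\Gamma$ can exclude that. Your relation ``$i>\Gamma(N)\Rightarrow v_{\pi(i)}<N^{-r}$'' is correct. To use it, however, you need some $N'\gtrsim_d i/\log^{d-1}(i+1)$ with $\Gamma(N')<i$, and that is precisely an estimate of the form $\Gamma(N')\lesssim_d N'\log^{d-1}(N'+1)$.

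In your inversion, the lower bound gives $\Gamma(M)\geq i$ for $M=\lceil C i/\log^{d-1}(i+1)\rceil$, hence $M>N$. This is an \emph{upper} bound on $N$, so ``hence $N+1\geq M/\text{const}$'' does not follow. Indeed, $M>N$ is perfectly consistent with your supposition $N+1\leq c\, i/\log^{d-1}(i+1)$, so no contradiction is reached. The justification ``at least $i$ indices have weight $\geq (N+1)^{-r}$, so $v_{\pi(i)}\leq (N+1)^{-r}$'' is also reversed: that fact yields $v_{\pi(i)}\geq (N+1)^{-r}$. Combined with $N+1\leq M$, your argument actually proves the reverse inequality $v_{\pi(i)}\gtrsim_{d,r} i^{-r}\log^{(d-1)r}(i+1)$. (The inequality $v_{\pi(i)}\leq (N+1)^{-r}$ does hold here, but for a different reason: $i>\Gamma(N)$ forces $v_{\pi(i)}<N^{-r}$, and the products $\prod_j(1+|k_j|)$ are integers.)

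The repair is what the paper does. Set $x=v_{\pi(i)}^{-1/r}$. The set $\{k: v_k\geq v_{\pi(i)}\}$ is the hyperbolic cross of order $x$ and contains at least $i$ elements. The standard \emph{upper} cardinality bound then gives $i\leq|\Lambda^{\mathsf{HC}}_{\lfloor x\rfloor}|\leq x\log^{d-1}(\E x)$, which inverts to $x\gtrsim_d i/\log^{d-1}(i+1)$.

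Your induction adapts directly to this upper bound. Summing over $k_1$, with a factor $2$ for its sign, gives $\Gamma_d(N)\leq 2\sum_{j\leq N}\Gamma_{d-1}(\lfloor N/j\rfloor)\lesssim_d \sum_{j\leq N}(N/j)\log^{d-2}(\E N/j)\lesssim_d N\log^{d-1}(\E N)$, with base case $\Gamma_1(N)=2N-1$. With this in place, take $N'=\lfloor c\, i/\log^{d-1}(i+1)\rfloor$ for a small $c=c_d$ to get $\Gamma(N')<i$. Hence $v_{\pi(i)}<N'^{-r}$, and small $i$ is absorbed into the constant.
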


\begin{proof}
For any $i \in \mathbb{N}$, since $\pi$ orders $v$ in nonincreasing order, the set $\{k \in \mathbb{Z}^d : v_k \geq v_{\pi(i)}\}$ contains at least $i$ elements. Observe that
\begin{align*}
\{k \in \mathbb{Z}^d : v_k \geq v_{\pi(i)}\} = \left\{ k \in \mathbb{Z}^d : \prod_{i=1}^d (1+|k_i|) \leq v_{\pi(i)}^{-1/r} \right\} = \Lambda^{\mathsf{HC}}_{v_{\pi(i)}^{-1/r}}.
\end{align*}
Since each factor $(1+|k_i|)$ is a positive integer, the product $\prod_{i=1}^d (1+|k_i|)$ is always a positive integer, so $\Lambda^{\mathsf{HC}}_{v_{\pi(i)}^{-1/r}} = \Lambda^{\mathsf{HC}}_{\lfloor v_{\pi(i)}^{-1/r} \rfloor}$. Applying the size estimate at the integer order $\lfloor w_n^{-1/\alpha} \rfloor \in \mathbb{N}$ (see, e.g., \cite[Prop.~A.1]{migliorati2013polynomial}) and using the fact that $\lfloor v_{\pi(i)}^{-1/r} \rfloor \leq v_{\pi(i)}^{-1/r}$, we get
\begin{align*}
i \leq |\Lambda^{\mathsf{HC}}_{\lfloor v_{\pi(i)}^{-1/r} \rfloor}| \leq \lfloor v_{\pi(i)}^{-1/r} \rfloor \log^{d-1}(\E \cdot \lfloor v_{\pi(i)}^{-1/r} \rfloor) \leq v_{\pi(i)}^{-1/r} \log^{d-1}(\E \cdot v_{\pi(i)}^{-1/r}).
\end{align*}
This gives $v_{\pi(i)}^{-1/r} \gtrsim_d i / \log^{d-1}(i+1)$, where $c_d$ is a constant dependent on $d$, and therefore $v_{\pi(i)} \lesssim_{d,r} i^{-r} \log^{(d-1)r}(i+1)$, as required.
\end{proof}

\prf{[Proof of Theorem \ref{t:wiener-main}]
Let $v$ and $\pi$ be as in the previous lemma, and define the weights
$w_i = i^{-r} \log^{(d-1) r}(i+1)$. Then the previous lemma implies that
\bes{
f \in S^{r}_{\theta} \cA(\bbT^d) \quad \Longrightarrow  \quad \tilde{f} : = \frac{f}{C_{d,r,\theta} \nm{f}_{S^r_{\theta} \cA} } \in F_{w,\theta},
}
where $C_{d,r,\theta} > 0$ depends on $d$, $r$ and $\theta$ only. Notice that the sparse recovery procedure $\cR_{s,n}$ satisfies $\cR_{s,n}(C f) = C \cR_{s,n}(f)$ for any $C>0$. Hence we may apply Theorem \ref{t:abstract-wiener-main} with $t = (d-1)r$ to $\tilde{f}$ and $K_{\phi} = 1$ to deduce that
\bes{
\nm{f - \hat{f}}_{L^p} \lesssim_{r,d} s^{1-1/p-1/\theta-r} \log^{(d-1)r}(s+1) \nm{f}_{\cS^{r}_{\theta} \cA},
}
as required.
}

\subsection{Comparison to the sampling widths of $S^{r}_{\theta} \cA(\bbT^d)$}\label{ss:wiener-samp-widths}

As in \cite{moeller2026best}, we now consider the nonlinear sampling width $\varrho_m$ \cite[Defn.\ 3.4]{moeller2026best}.

\begin{definition}
Let $F$ be a (quasi-)normed space of functions $D \rightarrow \bbC$, where function evaluations are continuous, which is continuously embedded into a Banach space $Y$. The $m$th \textit{(nonlinear) sampling width} is
\begin{align*}
\varrho_m(F)_Y = \inf_{x_1,\ldots,x_m \in D} \inf_{R\colon\mathbb{C}^m \to Y} \sup_{\|f\|_F \leq 1} \|f - R(f(x_1),\ldots,f(x_m))\|_Y.
\end{align*}
\end{definition}

We consider the case $Y = L^2(\bbT^d)$ and $F = S^{r}_{\theta} \cA(\bbT^d)$. Applying Theorem \ref{t:wiener-main} with
\be{
\label{s-choice-m}
s = c_{d,r,\theta} \frac{m}{\log^3(m+1)},
}
for some suitable constant $c_{d,r,\theta}$, we deduce the upper bound
\be{
\label{rhom-our-rate}
\varrho_m(S^{r}_{\theta} \cA(\bbT^d))_{L^p} \lesssim_{d,r,\theta} m^{1-1/p-1/\theta-r} \log^{(d-1)r+3(r+1/p+1/\theta-1)}(m+1).
}
As discussed in \cite{moeller2026best}, when $p = 2$ and $\theta\in(0,2]$, on has the lower bound (see \cite[Lem.\ B.1]{jahn2023sampling})
\bes{
\varrho_m(S^{r}_{\theta} \cA(\bbT^d))_{L^p} \gtrsim_{d,r,\theta} m^{1/2-1/\theta-r} \log(m+1)^{(d-1)r}.
}
Hence the upper bound \ef{rhom-our-rate} is sharp up to the additional factor $5/2+r+1/\theta$ appearing in the exponent of the log term. Moreover, the recovery procedure uses a truncation set of size
\be{
\label{wiener-truncation-size-us}
| \Lambda | = n = \begin{cases} \lceil s^{(r+1/\theta-1/2)/(r - (1-1/\theta)_+ ) } \rceil & r \leq 1/2 
\\
\max \{ s , \lceil s^{(r+1/\theta-1/2)/r} \rceil \} & r > 1/2
\end{cases},\qquad \text{where $s$ is as in \ef{s-choice-m}.}
}

\subsection{Comparison to the results of \cite{moeller2026best}}\label{ss:moller-comparison}

We now describe how Theorem \ref{t:wiener-main} improves on the results of \cite{moeller2026best}. In \cite[Cor.\ 6.2]{moeller2026best} the authors establish a bound for the sampling width of the form
\bes{
\varrho_{m}(S^r_{\theta} \cA)_{L^p} \lesssim_{d,r,\theta} m^{1-1/p-1/\theta-r} \log^{(d-1)r+3(r+1/p+1/\theta-1)}(m+1) .
}
Notice that this is precisely the same rate as in \ef{rhom-our-rate}.
This is done using a sparse recovery procedure (also involving the SR-LASSO) with truncation set
\be{
\label{moller-truncation}
\Lambda = [-M,M]^d \cap \bbZ^d.
}
Let $\cT_M = \left \{ g = \sum_{k \in [-M,M]^d \cap \bbZ^d} c_k \phi_k \right \}$ be the set of multivariate trigonometric polynomials with coefficients limited to the cube $[-M,M]^d$ and define 
\bes{
E_{[-M,M]^d}(S^r_{\theta}\cA)_{L^{\infty}} = \sup_{\nm{f}_{S^r_{\theta} \cA} \leq 1} E_{[-M,M]^d}(f),\qquad E_{[-M,M]^d}(f) = \inf_{g \in \cT_M} \nm{f - g}_{L^{\infty}},
}
In \cite[Proof of Cor.\ 6.2]{moeller2026best}, the authors first establish an error bound of the form 
\be{
\label{rhom-moller-bd}
\varrho_{m}(S^r_{\theta} \cA)_{L^q} \lesssim_{d,r,\theta} s^{1/2-1/p} \left ( s^{1/2-1/\theta -r} \log(s+1)^{(d-1) r} + E_{[-M,M]^d}(S^r_{\theta} \cA)_{L^{\infty}} \right ),
}
where $s$ is as in \ef{s-choice-m} (see \cite[Eqn.\ (3.1)]{moeller2026best}). The following result derives the precise scaling of the term $E_{[-M,M]^d}(S^r_{\theta} \cA)_{L^{\infty}}$.

\prop{
\label{prop:Linf-cube-rate}
The term $E_{[-M,M]^d}(S^r_{\theta} \cA)_{L^{\infty}}$ satisfies
\bes{
E_{[-M,M]^d}(S^r_{\theta} \cA)_{L^{\infty}} \asymp_{d,r,\theta} M^{(1-1/\theta)_+-r}.
}
}

Using this proposition, balancing terms in \ef{rhom-moller-bd} results in the choice
\bes{
M \asymp s^{(r+1/\theta-1/2)/(r-(1-1/\theta)_+)} .
}
Further, Proposition \ref{prop:Linf-cube-rate} shows that this is the best possible choice of $M$. However, this implies that the truncation set \ef{moller-truncation} satisfies
\be{
\label{wiener-truncation-size-them}
n = |\Lambda| \asymp s^{d(r+1/\theta-1/2)/(r-(1-1/\theta)_+)}.
}
where $s$ is as in \ef{s-choice-m}. Upon comparison with \ef{wiener-truncation-size-us}, we see that this size is much larger. In general the exponent of $s$ is $d$ times larger, meaning that the truncation set if \cite{moeller2026best} suffers from a severe curse of dimensionality, whereas ours does not. Moreover, when $\theta > 2$, the cardinality \ef{wiener-truncation-size-them} blows up as $r \rightarrow (1-1/\theta)_+$, while the size of our set \ef{wiener-truncation-size-us} remains bounded.

\rem{[The case $\theta > 2$]
If $\theta \geq 2$ then the truncation set $\Lambda$ in our recovery map satisfies $|\Lambda| = n = s$ due to \ef{wiener-truncation-size-us}. This means that the underlying problem being solved is no longer a sparse recovery problem, since $s = n$. This is indicative of the fact that linear algorithms can achieve the near-optimal rates for $\theta \geq 2$.

For $\theta \in [2,\infty]$ with $r>1-1/\theta$, a linear sampling algorithm achieves the rate
\begin{align*}
m^{-\left(r - 1 + \frac{1}{\theta} + \frac{1}{p}\right)} (\log m)^{(d-1)\left(r - \frac{1}{2} + \frac{1}{p}\right)}, \qquad p \in [2,\infty].
\end{align*}
This follows from \cite[Thm.\ 5.3, Rem.\ 5.4(ii)]{kolomoitsev2023sparse} with $\beta = 0$ and $N = 0$. The algorithm is the quasi-interpolation operator $P^Q_{n,0}$ of \cite{kolomoitsev2023sparse} with $Q = I$, where $I$ is the Dirichlet-type Lagrange interpolation operator of \cite[Example~2.3(i)]{kolomoitsev2023sparse}, with $m \asymp 2^n n^{d-1}$. The polynomial factor in $m$ is the same as in \ef{rho-m-upper-bound}. For $p = 2$, the logarithmic factors above are smaller than those in \ef{rho-m-upper-bound}, where the difference is due to the fact that \ef{rho-m-upper-bound} was derived via a compressed sensing formulation requiring $m \gtrsim s \log^3 s$ samples. For $p \in (2,\infty]$, the rate above is better than \ef{rho-m-upper-bound} even accounting for this, where this difference too is in the power of the logarithmic factors.

Notice that our reconstruction method is nonlinear. However, it remains valid in the `linear-is-sufficient' regime $\theta \geq 2$ and, moreover, in this regime the truncation set shrinks to the minimal size $n = s$. This does not occur in the case of \cite{moeller2026best}. In particular, \ef{wiener-truncation-size-them} blows up as $r \rightarrow (1-1/\theta)^+$, as noted above.

We remark in passing that in the case $\theta \geq 2$ one could use the larger value $s = c_{d,r,\theta} m / \log(m+1)$ rather than that given by \ef{s-choice-m}. This arises because $s = n$, meaning that the required properties (i.e., Theorems \ref{t:bounded-riesz-rNSP} and \ref{t:bounded-riesz-deviation}) reduce to estimates of the maximal and minimal singular values of $A$. These can be estimated more sharply using simpler matrix Chernoff bounds (see, e.g., \cite{tropp2012user-friendly}), rather than the more involved chaining arguments used in the proofs of Theorems \ref{t:bounded-riesz-rNSP} and \ref{t:bounded-riesz-deviation}. For succinctness we omit this derivation.
}

\prf{[Proof of Proposition \ref{prop:Linf-cube-rate}]
Let $f = \sum_{k \in \bbZ^k} \hat{f}_k \phi_k$ with $\nm{f}_{S^r_{\theta} \cA } \leq 1$. Then
$
E_{[-M,M]^d}(f)_{L^\infty} \leq \sum_{k \notin [-M,M]^d} |\hat{f}_k|.
$
Consider the weights $v_k = \prod^{d}_{i=1} (1+|k_i|)^{-r}$. By Lemma \ref{F-weighted-sn-term}, we see that
\bes{
\sum_{k \notin [-M,M]^d} |\hat{f}_k| \leq \left ( \sum_{k \notin [-M,M]^d} v^{\frac{1}{1-1/\theta}}_k \right )^{1-1/\theta} ,\qquad 1 < \theta \leq \infty,
}
and 
\bes{
\sum_{k \notin [-M,M]^d} |\hat{f}_k| \leq \sup_{k \notin [-M,M]^d} v_k,\qquad 0 < \theta \leq 1.
}
In the former case, notice that
\eas{
\sum_{k \notin [-M,M]^d} v^{\frac{1}{1-1/\theta}}_k & \leq d \left ( \sum_{\substack{k \in \bbZ \\ |k| > M}} (1+|k_1|)^{-r/(1-1/\theta)} \right ) \left ( \sum_{k \in \bbZ} (1+|k|)^{-r/(1-1/\theta)}  \right )^{d-1} 
\\
& \lesssim_{d,r,\theta} M^{1-r/(1-1/\theta)},
}
which gives
\bes{
E_{[-M,M]^d}(f)_{L^{\infty}} \lesssim_{d,r,\theta} M^{(1-1/\theta)-r}, \qquad 1 < \theta \leq \infty.
}
In the latter case, we straightforwardly see that
\bes{
E_{[-M,M]^d}(f)_{L^{\infty}} \lesssim_{d,r,\theta} M^{-r},\qquad 0 < \theta \leq 1.
}
This yields the desired upper bound.

We now establish the lower bound. Consider the case $1 < \theta \leq \infty$. The first step is to define a suitable function $f = f_M$.
Consider the shifted Dirichlet kernel function (see \cite[\S 1.2.1]{temlyakov2018multivariate})
\begin{align*}
f_M(x) = C_M \sum_{k_1 = M+1}^{2M} \psi_{(k_1, 0, \ldots, 0)}(x) = C_M \sum_{k_1=M+1}^{2M} \E^{2\pi \I k_1 x_1},
\end{align*}
which depends on $x_1$ only. For $1 \leq \theta < \infty$, we have
\begin{align*}
\|f_M\|_{S^r_{\theta} \cA}^{\theta} = C_M^{\theta}  \sum_{k_1=M+1}^{2M} (1+k_1)^{r \theta} \asymp_{r,\theta} C_M^{\theta} M^{r \theta+1},
\end{align*}
and therefore $\nm{f_M}_{S^r_{\theta} \cA} \asymp_{r,\theta} C_M M^{r+1/\theta}$. For $\theta = \infty$ we have 
\bes{
\|f_M\|_{S^r_{\infty} \cA} = C_M \sup_{k_1 \in (M, 2M]} (1+k_1)^r \asymp_r C_M M^r,
}
Therefore, we now pick $C_M \asymp_{r,\theta} M^{-r-1/\theta}$ so that $\nm{f_M}_{S^{r}_{\theta} \cA} \leq 1$.

We now lower bound $\nm{f_M - g}_{L^{\infty}}$ for any $g \in \cT_M$. To do this, let $K^N$ be the Fej\'{e}r kernel of order $N$ on $\mathbb{T}$ (see \cite[Section 1.2.2]{temlyakov2018multivariate}), defined by
$K^N(x) = \sum_{|j| \leq N} \left(1 - \frac{|j|}{N+1}\right) \E^{2\pi \I j x},
$
which satisfies $\|K^N\|_{L^1(\mathbb{T})} \asymp 1$. Define the shifted Fej\'{e}r kernel on $\bbT^d$ by
$
h(x) = \E^{2\pi \I (\lfloor 3M/2 \rfloor + 1) x_1} K^{\lfloor M/2 \rfloor}(x_1),
$
which depends only on $x_1$, satisfies $\|h\|_{L^1(\mathbb{T}^d)} \asymp 1$, and has Fourier support only in the $x_1$ direction, on frequencies $j_1 \in [M+1, 2M]$. Since this support lies outside $[-M, M]^d$, we have $\int_{\mathbb{T}^d} h(x) g(x)\, \D x = 0$ for all $g \in \cT_M$. Hence, for any such $g$, we have
\begin{align*}
\|f_M - g\|_{L^\infty} \geq \frac{\left|\int_{\mathbb{T}^d} h(x)(f_M(x) - g(x))\, \D x\right|}{\|h\|_{L^1(\mathbb{T}^d)}} \gtrsim \left|\int_{\mathbb{T}} h(x_1)f_M(x_1)\, \D x_1\right|.
\end{align*}
Further, we have
\begin{align*}
\int_{\mathbb{T}} h(x_1)f_M(x_1)\, \D x_1
= C_M \sum_{k_1=M+1}^{2M+1} \hat{K}^{\lfloor M/2 \rfloor}_{k_1 - \lfloor 3M/2 \rfloor} \asymp C_M M.
\end{align*}
Therefore,
\begin{align*}
E_{[-M,M]^d}(f_M)_{L^\infty} \gtrsim C_M M \asymp_{r,\theta} M^{-(\alpha+1/r)} \cdot M = M^{1-r-1/\theta}.
\end{align*}
This gives the desired result for $1 < \theta \leq \infty$.

We now consider the case $0 < \theta \leq 1$. Define the function $f_M(x) =  (M+2)^{-r} e^{2\pi \I (M+1) x_1}$ and observe that $\nm{f_M}_{S^{r}_{\theta} \cA} = 1$. For any $g \in \cT_M$, since $f_M$ and $g$ have disjoint Fourier supports, we have 
\begin{align*}
\|f_M - g\|_{L^2}^2 = \|f_M\|_{L^2}^2 + \|g\|_{L^2}^2 \geq \|f_M\|_{L^2}^2 = (M+2)^{-2r}.
\end{align*}
It follows that
\bes{
E_{[-M,M]^d}(S^r_{\theta} \cA)_{L^\infty} \geq E_{[-M,M]^d}(f_M)_{L^2}  \geq (M+2)^{-r}. \gtrsim_r M^{-r},
}
as required.
}

\section{Application to anisotropic Sobolev spaces}\label{s:anisotropic-sobolev}

We now consider the spaces \ef{Halphamix}.

\subsection{Proof of Theorem \ref{t:aniso-sob-main}.}

Since the Fourier basis is indexed over $\bbZ^d$ and our main results are formulated for bases indexed over $\bbN$, our first step is to re-index the Fourier basis. For $r \in \bbN$, let 
\bes{
\Lambda^{\mathsf{HC}}_r = \left \{ n = (k_1,\ldots,k_d) \in \bbZ^d : \prod^{d}_{j=1} (1+|k_j|) \leq r \right \}
}
be the hyperbolic cross index set of order $r$. Write $M_r = |\Lambda^{\mathsf{HC}}_r |$ and notice that $M_1 = 1$. Now let $\pi : \bbN \rightarrow \bbZ^d$ be a bijection such that
\bes{
\left \{ \pi(1),\ldots,\pi(M_r) \right \} = \Lambda^{\mathsf{HC}}_r,\quad \forall r \in \bbN
}
and define $\phi_i = \psi_{\pi(i)}$, $\forall i \in \bbN$. For later use, we now recall that $M_r$ satisfies the bound
\be{
\label{HC-bound}
\frac{1}{(d-1)!} \frac{r (\log r)^d}{\log r + d} \leq M_r \leq r \log^{d-1}(\E r), \quad \forall r , d \in \bbN,
}
(see, e.g., \cite[\S B.2]{adcock2022sparse}). We now require the following lemma.

\lem{
Let $f \in H^{\alpha}_{\mathsf{mix}}(\bbT^d)$, $\alpha > 1/2$, and write $c = (c_i)_{i \in \bbN}$ for its vector of Fourier coefficients, i.e., $c_i = \hat{f}_{\pi(i)}$. Then
\bes{
\sigma_s(c)_q \lesssim_{d,\alpha,q} s^{\frac1q-\frac12-h(\alpha)} (\log s)^{h(\alpha)(p(\alpha)-1)} \nm{f}_{H^{\alpha}_{\mathsf{mix}} },
}
for any $s \in \bbN$ and $q \in [1,\infty]$, and, for any $q \in [1,2]$ and $n \in \bbN$, 
\bes{
\tau_n(c)_q \lesssim_{d,\alpha,q} r^{\frac1q-\frac12-h(\alpha)} \log(r)^{(d-1)(\frac1q-\frac12)} \nm{f}_{H^{\alpha}_{\mathsf{mix}} },
}
where $r \in \bbN$ is the largest integer such that $n \geq M_r$. Moreover, we also have
\bes{
\tau_n(c)_q \lesssim_{d,\alpha,q} n^{\frac1q-\frac12-h(\alpha)} \log(n)^{(d-1)h(\alpha)} \nm{f}_{H^{\alpha}_{\mathsf{mix}} }.
}
}
\prf{
The first result follows directly from \cite[Thm.\ 2.7]{adcock2026universal}. We now consider the second result.  From the definition of $r$, we have $M_r \leq n < M_{r+1}$. We now have
\eas{
\tau_n(c)^q_q & \leq \sum_{n \notin \Lambda^{\mathsf{HC}}_{r} } | \hat{f}_n |^q = \sum_{n \notin \Lambda^{\mathsf{HC}}_{r} } | \hat{f}_n |^q \frac{\prod^{d}_{j=1} (1+|k_j|)^{qh(\alpha)}}{\prod^{d}_{j=1} (1+|k_j|)^{q h(\alpha)}}  
}
By H\"older's inequality, we obtain
\bes{
\tau_n(c)^q_q \leq \left ( \sum_{n \notin \Lambda^{\mathsf{HC}}_{r} } | \hat{f}_n |^2 \prod^{d}_{j=1} (1+|k_j|)^{2h(\alpha)} \right )^{\frac{q}{2}} \left ( \sum_{n \notin \Lambda^{\mathsf{HC}}_{r} } \prod^{d}_{j=1} (1+|k_j|)^{-2qh(\alpha)/(2-q)} \right )^{1-\frac{q}{2}} = I_1 \cdot I_2.
}
For $I_1$, we use the fact that $h(\alpha) \leq \alpha_j$, $\forall j \in [d]$, to get
$
I_1 \leq \nm{f}^{q}_{H^{\alpha}_{\mathsf{mix}}}.
$
For $I_2$, we use, e.g., \cite[Thm.\ 2.30]{adcock2011modified} to get
\bes{
I_2 \lesssim_{d,\alpha,q} \left ( r^{1-2qh(\alpha)/(2-q)} \log(r)^{d-1} \right )^{1-\frac{q}{2}}.
}
Taking the $q$th root, we deduce that
\bes{
\tau_n(c)_q \lesssim \nm{f}^{q}_{H^{\alpha}_{\mathsf{mix}}} \lesssim_{d,\alpha,q} r^{\frac1q-\frac12-h(\alpha)} \log(r)^{(d-1)(\frac1q-\frac12)},
}
as required. 
For the final result, we use \ef{HC-bound} and the fact that $M_r \geq r$ to deduce that
\bes{
r \leq n < (r+1) \log^{d-1}(\E(r+1)).
}
It follows that $n \geq r \gtrsim_d n / \log^{d-1}(2n)$. We now apply the previous result.
}

\lem{
\label{lem:aniso-Sob-main-err-ubs}
Let $s,n \in \bbN$, $s , n \geq 2$, and $f \in H^{\alpha}_{\mathsf{mix}}(\bbT^d)$, where $\alpha > 1/2$, and consider the right-hand sides of \ef{main-err-bd-l2} and \ef{main-err-bd-lp}. Then there is a choice of $u,v \in (0,2)$ depending on $\alpha$, $d$ only such that the right-hand side of \ef{main-err-bd-l2} is bounded, up to a constant depending on $\alpha$ and $d$ only, by
\bes{
E_2 : = \left ( s^{-h(\alpha)} (\log s)^{h(\alpha)(p(\alpha)-1)} + n^{-h(\alpha)} \log(n)^{(d-1) h(\alpha)} \right ) \nm{f}_{H^{\alpha}_{\mathsf{mix}} }
}
and the right-hand side of \ef{main-err-bd-lp} is bounded, up to a constant depending on $\alpha$, $p$ and $d$ only, by
\eas{
E_p : = \Bigg ( & s^{\frac12-\frac1p - h(\alpha)} (\log s)^{h(\alpha)(p(\alpha)-1)} + s^{-\frac1p} n^{\frac12-h(\alpha)} (\log n)^{(d-1)h(\alpha)}
\\
& + s^{\frac12-\frac1p} n^{-h(\alpha)} \log(n)^{(d-1) h(\alpha)} \Bigg ) \nm{f}_{H^{\alpha}_{\mathsf{mix}}}.
}
In particular,
\bes{
n \gtrsim_d s (\log s)^{d-1} \quad \Rightarrow \quad E_2 \lesssim_d s^{-h(\alpha)} (\log s)^{h(\alpha)(p(\alpha)-1)} \nm{f}_{H^{\alpha}_{\mathsf{mix}}}
}
and
\bes{
n \gtrsim_d s (\log s)^{\frac{(d-1) h(\alpha) }{h(\alpha)-1/2}} \quad \Rightarrow \quad E_p \lesssim_d s^{\frac12-\frac1p-h(\alpha)} (\log s)^{h(\alpha)(p(\alpha)-1)} \nm{f}_{H^{\alpha}_{\mathsf{mix}}}.
}
}
\prf{
Recall that $a_{\phi} = b_{\phi} = K_{\phi} = 1$, since the Fourier basis is orthonormal and uniformly bounded by one. By the previous lemma, the right-hand side of \ef{main-err-bd-l2} is bounded, up to a constant depending on $\alpha$, $d$ only, by
\eas{
\widetilde{E}_2 : = &~ s^{-h(\alpha)} (\log s)^{h(\alpha)(p(\alpha)-1)} +  \left ( \frac1s \sum_{j > s} \left ( j^{-h(\alpha)} (\log j)^{h(\alpha)(p(\alpha)-1)} \right )^u \right )^{\frac1u} 
\\
& + \left ( \frac1n \sum_{j > n/2} \left(j^{-h(\alpha)} \log(j)^{(d-1) h(\alpha)} \right )^{v} \right )^{\frac1v} 
}
Since $h(\alpha) > 1/2$, there exists values of $u,v$ depending on $\alpha$ only such that both sums converge, and give
\bes{
\widetilde{E}_2 \lesssim_{d,\alpha} s^{-h(\alpha)} (\log s)^{h(\alpha)(p(\alpha)-1)} + n^{-h(\alpha)} \log(n)^{(d-1) h(\alpha)},
}
as required. The argument for the right-hand side of \ef{main-err-bd-lp} is similar. The final two results follow simply by using the inequality for $n$.
}

\prf{[Proof of Theorem \ref{t:aniso-sob-main}]

Let $\Lambda = \Lambda^{\mathsf{HC}}_s$.
The basis $\{ \phi_i \}_{i \in \bbN}$ is a bounded orthonormal system with $a_{\phi}  = b_{\phi} = K_{\phi} = 1$. Using this and \ef{HC-bound}, we see that \ef{samp-cond-2} is implied by \ef{m-cond-HC}. It follows from Theorem \ref{t:main-res} that, with probability at least $1-\varepsilon$, the approximation $\hat{f}$ satisfies \ef{main-err-bd-l2} for any $f \in L^2(\bbT^d)$ that is defined everywhere. Now suppose that $f \in H^{\alpha}_{\mathsf{mix}}(\bbT^d)$ for some $h(\alpha) > 1/2$. The latter condition implies that $f \in C(\bbT^d)$ \cite[Rem.\ 3.3]{adcock2026universal}. Therefore $\hat{f}$ satisfies \ef{main-err-bd-l2}, where $c = (c_i)_{i \in \bbN}$ is its vector of Fourier coefficients, i.e., $c_i = \hat{f}_{\pi(i)}$. Using Lemma \ref{lem:aniso-Sob-main-err-ubs} and the fact that $n = |\Lambda^{\mathsf{HC}}_s| \gtrsim_d s (\log s)^{d-1}$ by \ef{HC-bound}, we deduce that
\bes{
\nm{f - \hat{f}}_{L^2_{\rho}} \lesssim_{d,\alpha} s^{-h(\alpha)} (\log s)^{h(\alpha)(p(\alpha)-1)} \nm{f}_{H^{\alpha}_{\mathsf{mix}} }.
}
This gives the desired error bound. The estimate \ef{aniso-sob-n-new} follows from \ef{HC-bound}.
}

\subsection{Comparison to the results of \cite{adcock2026universal}}\label{ss:Halphamix-comparison}

In \cite[Thm.\ 3.1]{adcock2026universal} the authors establish a similar result using a similar sparse recovery algorithm. Inspecting the proof, this result uses a hyperbolic cross index set $\Lambda = \Lambda^{\mathsf{HC}}_r$, where $r = \lceil s^{u(s)} \rceil$ and $u(s)$ is some fixed, but arbitrary, increasing function of $s$ with $u(s) \rightarrow \infty$ as $s \rightarrow \infty$. Due to \ef{HC-bound}, this means that 
\bes{
n = |\Lambda| \gtrsim_{d} s^{u(s)} (u(s) \log(s+1))^{d-1}.
}
In particular, $n$ grows superalgebraically with $s$ as $s \rightarrow \infty$. Furthermore, in \cite[Thm.\ 3.1]{adcock2026universal} the term $s$ is chosen so that
\bes{
m \geq c_d \cdot s \cdot \left ( \log^3(2s) \cdot u(s) + \log(1/\varepsilon) \right ).
}
Our result improves this scaling by removing the factor $u(s)$.

\section{Conclusion}\label{s:conclusion}

A standard approach to function approximation from random samples is to truncate the basis expansion at index $n$, form the measurement matrix from i.i.d.\ samples, and apply compressed sensing to recover a sparse coefficient vector. Truncating introduces the discrete error term $\nm{e}_2$ (defined in \ef{bAe-def}), which in current approaches is bounded by $\nm{f - f_n}_{L^\infty_\rho}$(which is further bounded by $\tau_n(c)_1$). This is a worst-case bound that ignores the i.i.d.\ structure of the sample points, and with high probability i.i.d.\ points do not realise the pointwise maximum, so the standard bound is overly pessimistic. Its slow decay in $n$ forces one to use large truncation sets, yielding large measurement matrices and high computational cost. Since $\nm{e}_2^2 = \frac{1}{m}\sum_{i=1}^m |f(x_i) - f_n(x_i)|^2$ is an empirical sum-of-squares of the tail, at i.i.d.\ sample points it should track $\nm{f - f_n}_{L^2_\rho}^2$ with high probability rather than the $L^\infty$ norm.

Our alternate approach to bounding $\nm{e}_2$, inspired by the upper bounds in the form of scaled tail sum of approximation numbers and variants in works such as \cite{krieg2021function,krieg2021function2,nagel2022new,dolbeault2023sharp,kammerer2021worst}, yields depending on terms of the form $(\frac{1}{s}\sum_{j>s}\sigma_j(c)_2^u)^{1/u}$ and $(\frac{1}{n}\sum_{j>n/2}\tau_j(c)_2^v)^{1/v}$, for any $0 < u,v < 2$. For functions with sufficient smoothness, these quantities decay no worse than the corresponding terms $\sigma_s(c)_2$ and $\tau_n(c)_2$ asymptotically. In the applications to weighted Wiener spaces and anisotropic Sobolev spaces, this translates to a truncation set that is essentially linear in $s$, giving smaller matrices and lower computational cost. 

There are several avenues for future work. First, our results require $\ell^u$- and $\ell^v$-summability of the terms $\sigma_j(c)_2$ and $\tau_j(c)_2$ for some $0 < u,v < 2$, which may not hold in all cases. It would be interesting to see if this could be removed. Second, our results only consider sparse approximation using bounded Riesz bases (i.e., those satisfying \ef{Kphi-def}). For unbounded bases---a key example being the Legendre polynomials---the concept of \textit{weighted} sparsity was introduced \cite{rauhut2016interpolation}, and applied to the approximation of classes of holomorphic functions \cite{adcock2024efficient,adcock2025near,adcock2024optimal,adcock2025optimal}. We believe our main result may extend to weighted sparsity, and intend to explore this in future work.

\section*{Acknowledgements}
BA and SB acknowledge support from the Natural Sciences and Engineering Research
Council of Canada (NSERC) through grants RGPIN/2026-04531 and RGPIN/2020-06766, respectively. BA, SB \& AG acknowledge the support
of FRQ (Fonds de recherche du Qu\'ebec) – Nature et Technologies through grant 359708.

\bibliographystyle{abbrv}
\small
\bibliography{Sparse-widths-opt-bib}

\end{document}